\newtheorem{theo}{Theorem}[section]
\newtheorem{prop}[theo]{Proposition}
\newtheorem{lemma}[theo]{Lemma}
\begin{document}

\pagestyle{plain}

\title{Some extremal ratios of the distance and subtree problems in binary trees}

\author{Shuchao Li}
\address{Shuchao Li\\
School of Mathematics and Statistics \\
Central China Normal University \\
Wuhan 430079, P.R. China
}
\email{lscmath@mail.ccnu.edu.cn}

\author{Hua Wang}
\address{Hua Wang\\
Department of Mathematical Sciences \\
Georgia Southern University \\
Statesboro, GA 30460, USA
}
\email{hwang@georgiasouthern.edu}
\thanks{S. Wang is the corresponding author. \\
\hspace*{4.3mm}S. Li was partially supported by the National Natural Science Foundation of China (Grant Nos. 11671164, 11271149), the Program for New Century Excellent Talents in University (Grant No. NCET-13-0817) and H. Wang was partially supported by the Simons Foundation (\#245307).}

\author{Shujing Wang}
\address{Shujing Wang\\
School of Mathematics and Statistics \\
Central China Normal University \\
Wuhan 430079, P.R. China
}
\email{wangsj@mail.ccnu.edu.cn}

\subjclass[2010]{05C05, 05C30}
\keywords{tree; subtree; Wiener index; ratio}

\begin{abstract}
Among many topological indices of trees the sum of distances $\sigma(T)$ and the number of subtrees $F(T)$ have been a long standing pair of graph invariants that are well known for their negative correlation. That is, among various given classes of trees, the extremal structures maximizing one usually minimize the other, and vice versa. By introducing the ``local'' versions of these invariants, $\sigma_T(v)$ for the sum of distance from $v$ to all other vertices and $F_T(v)$ for the number of subtrees containing $v$, extremal problems can be raised and studied for vertices within a tree. This leads to the concept of ``middle parts'' of a tree with respect to different indices. A challenging problem is to find extremal values of the ratios between graph indices and corresponding local functions at middle parts or leaves. This problem also provides new opportunities to further verify the the correlation between different indices such as $\sigma(T)$ and $F(T)$. Such extremal ratios, along with the extremal structures, were studied and compared for the distance and subtree problems for general trees (Barefoot, Entringer and Sz\'ekely, Discrete Applied Math 80 (1997); Sz\'ekely and Wang, Electronic Journal of Combinatorics 20 (2013); Sz\'ekely and Wang, Discrete Mathematics 322 (2014)). In this paper this study is extended to binary trees, a class of trees with numerous practical applications in which the extremal ratio problems appear to be even more complicated. After justifying some basic properties on the distance and subtree problems in trees and binary trees, characterizations are provided for the extremal structures achieving two extremal ratios in binary trees of given order. The generalization of this work to $k$-ary trees is also briefly discussed. The findings are compared with the previous established extremal structures in general trees. Lastly some potential future work is mentioned.
\end{abstract}

\maketitle

\section{Introduction}

The study of questions related to distances in graphs dates back to as early as \cite{jordan}, if not earlier, and has applications in many different fields. The sum of distances between vertices in a graph $G$
$$ \sigma(G) = \sum_{u,v \in V(G)} d_G(u,v), $$
where $d_G(u,v)$ is the distance between $u$ and $v$ in $G$, is also well known as the Wiener index \cite{wiener} for its application in chemical graph theory. Numerous research articles have been published on problems related to the Wiener index. Of particular interest to our work are a number of extremal results on the Wiener index in trees \cite{cela, gutman, ejs, rauten, lovasz, nina, degreeseq, zhang2008, zhang2010}.

While the Wiener index is a representative of distance-based graph invariants, one of the first counting-based graph invariants is the number of subtrees, denoted by $F(T)$ for a tree $T$. This concept, in addition to its application in phylogenetic tree reconstruction \cite{dan}, received much attention from mathematicians and computer scientists in recent years. The extremal results on the number of subtrees of a tree have been established for various classes of trees \cite{kirk, shuchao, subtrees, congnum, largest, gray, joc}.

The above mentioned work led to an interesting observation, that among certain class of graphs/trees, the extremal structure that maximizes the Wiener index usually minimizes the number of subtrees, and vice versa. Such a correlation was further analyzed in \cite{correl}.

If $\sigma(T)$ and $F(T)$ are to be considered as the ``global'' functions defined on trees, the distance function at $v$
$$ \sigma_T(v) = \sum_{u\in V(T)} d_T(u,v) $$
and the number of subtrees containing $v$ in $T$ (denoted by $F_T(v)$) are the natural ``local'' versions. Extremal problems on such local functions lead to the definition of ``middle parts'' of a tree, which are collections of vertices that maximize or minimize certain functions. The first such result is on the set of vertices that minimize the distance function, called the {\it centroid} of a tree and denoted by $C(T)$ \cite{jordan, zelinka}. It was shown that $C(T)$ contains one or two adjacent vertices. Another ``middle part'' of a tree, defined as the set of vertices that maximize $F_T(v)$, is called the {\it subtree core} of $T$ and denoted by $Core(T)$ \cite{subtrees}. As further evidence of the correlation between $\sigma(T)$ and $F(T)$, the subtree core was also shown to contain one or two adjacent vertices. Furthermore, it is known that $\sigma_T(v)$ is maximized and $F_T(v)$ is minimized at a leaf vertex.

For vertices $v \in C(T)$ and $u,w \in L(T)$ (leaf set of $T$), the extremal values of  $\sigma_T(w)/\sigma_T(u)$, $\sigma_T(w)/\sigma_T(v)$, $\sigma(T)/\sigma_T(v)$, and $\sigma(T)/\sigma_T(w)$ were determined for a tree of given order in \cite{barefoot}. As an effort to further verify the negative correlation between the distance problem and the subtree problem, the extremal values of $F_T(w)/F_T(u)$, $F_T(w)/F_T(v)$, $F(T)/F_T(v)$,
and $F(T)/F_T(w)$ were determined in \cite{ratio1, ratio2} for trees of given order, where $v \in Core(T)$, and $u,w \in L(T)$---the complete analogue of  \cite{barefoot}. In \cite{ratio1} it was said that ``extremal behavior of {\em fractions}
is always more delicate than that of the numerator and denominator, therefore it is a natural step to see how far duality between Wiener index and the number of subtrees extend when we study extreme values of the ratios''.  See the table in \cite{ratio1} for a nice summary and comparison of the results in \cite{barefoot, ratio1, ratio2}.

{\it Binary trees} are trees in which every internal vertex is of degree 3, note that this is not to be confused with a {\it rooted binary tree} where every internal vertex has degree 3 except the root which has degree 2. The binary tree is an important data storage/search structure in information science, as well as a default model in many applications such as phylogenetic reconstruction. For earlier work on the Wiener index and the number of subtrees in binary trees one may see \cite{rauten, sorder, subtrees, largest} and the references therein.

We will, in this paper, consider some of the extremal ratios for distance and subtree problems in binary trees, further exploring the correlation between these two concepts. First in Section~\ref{sec:pre} we present some basic properties related to the distance function and number of subtrees in binary trees. We then consider, among binary trees of order $n$ (for an even $n$),  the minimum $\sigma_T(w)/\sigma_T(v)$ (for $w \in L(T)$ and $v\in C(T)$) in Section~\ref{sec:min} and minimum $F_T(v)/F_T(w)$ (for $v \in Core(T)$ and $w \in L(T)$) in Section~\ref{sec:minf}. In Section~\ref{sec:con}, we summarize our findings and further comment on the correlation between the distance and subtree problems. Generalizations to $k$-ary trees and topics for future work are also mentioned.

\section{Preliminaries}
\label{sec:pre}

We first present some facts related to distance and subtree problems in trees and binary trees.  Although some of these observations may have been used or established (informally) in other studies, we include their justifications for completeness. These proofs also help us understand the basic techniques in dealing with distance and subtree problems in trees and binary trees.

The first such fact presents a simple but useful condition on a centroid vertex and its neighbor.

\begin{prop}\label{prop:dis}
For a vertex $v \in C(T)$ and its neighbor $u$, we have
$$ n_{uv}(v) \geq n_{uv}(u) $$
with equality if and only if $u \in C(T)$. Here $n_{uv}(v)$ (resp. $n_{uv}(u)$) is the number of vertices in the component containing $v$ (resp. $u$) in $T-uv$.
\end{prop}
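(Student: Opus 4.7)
The plan is to relate the quantities $n_{uv}(v)$ and $n_{uv}(u)$ directly to the difference $\sigma_T(u) - \sigma_T(v)$, and then invoke the definition of $C(T)$ as the set of vertices minimizing $\sigma_T(\cdot)$.

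First I would partition $V(T)$ according to the two components of $T-uv$: call them $A$ (containing $u$) and $B$ (containing $v$), so that $|A| = n_{uv}(u)$ and $|B| = n_{uv}(v)$. The key observation is that for any vertex $x \in A$, the unique path from $x$ to $v$ goes through $u$, so $d_T(x,v) = d_T(x,u) + 1$; symmetrically, for any $y \in B$, $d_T(y,u) = d_T(y,v) + 1$. Summing these relations over all vertices gives
\[
\sigma_T(u) - \sigma_T(v) \;=\; \sum_{y \in B}\bigl(d_T(y,v)+1\bigr) + \sum_{x \in A}\bigl(d_T(x,u)-(-1)\cdot 0\bigr) \;-\; \sigma_T(v),
\]
which after cancellation reduces to the clean identity
\[
\sigma_T(u) - \sigma_T(v) \;=\; |B| - |A| \;=\; n_{uv}(v) - n_{uv}(u).
\]

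Next I would apply the hypothesis $v \in C(T)$: by definition, $\sigma_T(v) \le \sigma_T(u)$, so the left-hand side above is non-negative, which immediately yields $n_{uv}(v) \ge n_{uv}(u)$. For the equality case, equality in $n_{uv}(v) = n_{uv}(u)$ is equivalent to $\sigma_T(u) = \sigma_T(v)$; since $\sigma_T(v)$ is the minimum value of $\sigma_T$, this happens precisely when $u$ also achieves this minimum, i.e., $u \in C(T)$.

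I do not anticipate a genuine obstacle here: the only step requiring care is the bookkeeping that transforms the pointwise identities $d_T(x,v)=d_T(x,u)+1$ and $d_T(y,u)=d_T(y,v)+1$ into the global identity $\sigma_T(u)-\sigma_T(v) = n_{uv}(v) - n_{uv}(u)$. Once this identity is in hand, both the inequality and its equality characterization are immediate consequences of the definition of the centroid, with no additional structural argument about $T$ needed.
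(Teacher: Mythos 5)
Your proposal is correct and follows essentially the same route as the paper: both derive the identity $\sigma_T(u)-\sigma_T(v)=n_{uv}(v)-n_{uv}(u)$ by splitting the distance sums over the two components of $T-uv$ and then invoke the minimality of $\sigma_T(v)$ for $v\in C(T)$. The intermediate displayed equation in your write-up is garbled (the term $d_T(x,u)-(-1)\cdot 0$ does not parse as a sensible step), but the pointwise relations and the final identity you state are exactly right, so this is only a bookkeeping slip, not a gap.
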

\begin{proof}
Let $T_u$ and $T_v$ be the components of $T-uv$ that contain $u$ and $v$, respectively. Then
\begin{eqnarray*}
  \sigma_T(v)&=&\sum_{w\in V(T)}d_T(w,v)\\
   &=& \sum_{w\in V(T_u)}d_T(w,v)+\sum_{w\in V(T_v)}d_T(w,v) \\
   &=&\sum_{w\in V(T_u)}(d_T(w,u)+1)+\sum_{w\in V(T_v)}(d_T(w,u)-1)  \\
   &=& \sum_{w\in V(T)}d_T(w,u)+|V(T_u)|-|V(T_v)| \\
   &=& \sigma_T(u)+n_{uv}(u)-n_{uv}(v).
\end{eqnarray*}
As $v \in C(T)$, we have that $\sigma_T(v)\leq \sigma_T(u)$ with equality if and only if $u\in C(T)$. The conclusion then follows.
\end{proof}

A parallel statement for the subtree problem is the following.

\begin{prop}\label{prop:sub}
For a vertex $v \in Core(T)$ and its neighbor $u$, we have
$$ F_{T_v}(v) \geq F_{T_u}(u) $$
with equality if and only if $u \in Core(T)$. Here $T_v$ ($T_u$) is the component containing $v$ ($u$) in $T-uv$.
\end{prop}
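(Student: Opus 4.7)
The plan is to mirror the proof of Proposition~\ref{prop:dis}, but replace the additive decomposition of $\sigma_T(v)$ along the edge $uv$ with a multiplicative decomposition of $F_T(v)$. The key observation is that every subtree of $T$ containing $v$ either avoids $u$ entirely (and is then a subtree of $T_v$ containing $v$) or contains the edge $uv$ (and is then uniquely obtained by joining a subtree of $T_v$ containing $v$ with a subtree of $T_u$ containing $u$).

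First I would write down, based on this dichotomy, the identity
\[
F_T(v) \;=\; F_{T_v}(v) \;+\; F_{T_v}(v)\cdot F_{T_u}(u) \;=\; F_{T_v}(v)\bigl(1+F_{T_u}(u)\bigr),
\]
and the symmetric identity
\[
F_T(u) \;=\; F_{T_u}(u)\bigl(1+F_{T_v}(v)\bigr).
\]
Next, since $v\in Core(T)$, I have $F_T(v)\ge F_T(u)$, so substituting and expanding gives
\[
F_{T_v}(v)+F_{T_v}(v)F_{T_u}(u) \;\ge\; F_{T_u}(u)+F_{T_v}(v)F_{T_u}(u),
\]
and the common term cancels to yield $F_{T_v}(v)\ge F_{T_u}(u)$. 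The same chain of equivalences shows that equality here is equivalent to $F_T(v)=F_T(u)$, i.e.\ to $u\in Core(T)$.

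The only step requiring any care is the initial decomposition: one must verify that the map sending a pair (subtree of $T_v$ containing $v$, subtree of $T_u$ containing $u$) to their union together with the edge $uv$ is a bijection onto the subtrees of $T$ containing both $u$ and $v$. This follows because removing the edge $uv$ from any such subtree produces exactly one component in each of $T_v$ and $T_u$, and connectedness forces these components to contain $v$ and $u$ respectively. Once that bijection is in place, the rest is a short algebraic manipulation, so I would not expect any serious obstacle.
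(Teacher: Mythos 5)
Your proposal is correct and follows essentially the same route as the paper: the paper writes $F_T(v)=F_{T_v}(v)+F_T(u,v)$ and $F_T(u)=F_{T_u}(u)+F_T(u,v)$ and cancels the common term $F_T(u,v)$, which is exactly your common term $F_{T_v}(v)\cdot F_{T_u}(u)$ left unfactored. Your extra step of verifying the product formula for the subtrees containing both $u$ and $v$ is sound but not needed for the cancellation.
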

\begin{proof}
Let $F_T(u, v)$ denote the number of subtrees of $T$ that contain both $u$ and $v$. Then
\[
F_T(v) = F_{T_v}(v)+F_T(u,v),  \  \  \ F_T(u) = F_{T_u}(u)+F_T(u,v).
\]

As $v \in Core(T)$,  we have that $F_T(v)\geq F_T(u)$ with equality if and only if $u\in Core(T)$. Hence $$ F_{T_v}(v) \geq F_{T_u}(u)$$
with equality if and only if $u \in Core(T)$.
\end{proof}

Next we consider a rooted version of the extremal distance problem. This observation will be used frequently in our argument.

\begin{prop}\label{prop:rdis}
For a rooted binary tree $T$ with root $r$ and given number of vertices, $\sigma_{T}(r)$ is maximized by the ``rooted binary caterpillar'' (Figure~\ref{fig:bin}) whose removal of a leaf neighbor of $r$ results in a binary caterpillar.
\end{prop}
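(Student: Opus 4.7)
I plan to proceed by induction on the number of vertices $n$. First I would observe that a rooted binary tree (root of degree $2$, other internal vertices of degree $3$, leaves of degree $1$) must satisfy $n = 2\ell - 1$ for the number of leaves $\ell$ by a degree-sum count, so the induction runs over odd $n$. The base cases $n = 1$ and $n = 3$ are immediate: the tree is unique, and it coincides with the rooted binary caterpillar of that order.

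For the inductive step, let $T$ have root $r$ with children $a, b$ and subtrees $T_a, T_b$ of odd sizes $n_a, n_b$ with $n_a + n_b = n - 1$. Writing each distance from $r$ as one more than the distance from $a$ or $b$ in the relevant subtree gives
\begin{equation*}
\sigma_T(r) = \sigma_{T_a}(a) + \sigma_{T_b}(b) + (n - 1).
\end{equation*}
The inductive hypothesis bounds each of $\sigma_{T_a}(a)$ and $\sigma_{T_b}(b)$ by the maximum over rooted binary trees of the corresponding order, attained only by the rooted binary caterpillar. So the remaining task is to maximize $f(n_a) + f(n_b)$ over odd $n_a, n_b \geq 1$ with $n_a + n_b = n - 1$, where $f(m)$ denotes this maximum.

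Alongside the structural claim I would carry an auxiliary numerical claim through the induction: $f(m) = (m^2 - 1)/4$ for odd $m$. Assuming this for smaller orders yields $f(n_a) + f(n_b) = (n_a^2 + n_b^2 - 2)/4$, and writing $n_a^2 + n_b^2 = (n-1)^2 - 2 n_a n_b$ reduces the maximization to minimizing $n_a n_b$, which forces $\{n_a, n_b\} = \{1, n - 2\}$. Substituting gives $f(n) = (n^2 - 1)/4$, closing the numerical induction; tracing equality then forces one child of $r$ to be a leaf and the other subtree to be the rooted binary caterpillar on $n - 2$ vertices, so $T$ is the rooted binary caterpillar on $n$ vertices. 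The main subtlety is the interdependence between the structural claim (``caterpillar is extremal'') and the numerical claim ($f(m) = (m^2-1)/4$): neither is easy to derive in isolation, so they must be coupled into a single simultaneous induction hypothesis. Once that coupling is in place, the heart of the argument is the elementary observation that for a fixed sum $n_a + n_b$, the product $n_a n_b$ is minimized at the extreme $(1, n-2)$, which is what pins down the extremal structure.
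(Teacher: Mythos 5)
Your proof is correct, but it follows a genuinely different route from the paper. The paper argues directly by contradiction with a local exchange: it takes a longest path $P$ from $r$ in the larger root-subtree and shows that any non-leaf neighbor $w$ of a path vertex can have its two children reattached at the deepest vertex $x$, which strictly increases $\sigma_T(r)$ by $(|T_w|-1)(s-d_T(w,r))>0$; hence every off-path neighbor is a leaf and $T$ is the rooted binary caterpillar. Your argument instead couples the structural claim with the closed form $f(m)=(m^2-1)/4$ in a simultaneous induction, using the decomposition $\sigma_T(r)=\sigma_{T_a}(a)+\sigma_{T_b}(b)+(n-1)$ and the observation that for fixed even sum $n_a+n_b=n-1$ the product $n_an_b=\frac{(n-1)^2-(n_a-n_b)^2}{4}$ is minimized exactly at $\{1,n-2\}$; tracing equality then pins down the structure. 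Both are complete. The paper's exchange argument is more robust in that it transfers almost verbatim to the subtree-count analogue (Proposition~\ref{prop:rsub}), where no clean closed form is available; your induction has the advantage of producing the extremal value $(n^2-1)/4$ as a byproduct, which is essentially the quantity $\frac{(m+2)^2-5}{4}$ per branch that the paper must recompute separately in Lemma~\ref{lem:com}, and it makes the uniqueness of the extremal tree transparent. The only point to be careful about, which you already flag, is that the numerical and structural claims must be carried together as one induction hypothesis; with that in place the argument closes.
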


\begin{figure}[htbp]
\centering
    \begin{tikzpicture}[scale=.6]
        \node[fill=black,circle,inner sep=1.4pt] (t1) at (0,0) {};
        \node[fill=black,circle,inner sep=1.4pt] (t2) at (1,1) {};
        \node[fill=black,circle,inner sep=1.4pt] (t3) at (2,0) {};
        \node[fill=black,circle,inner sep=1.4pt] (t4) at (2,2) {};
        \node[fill=black,circle,inner sep=1.4pt] (t5) at (3,1) {};
				        \node[fill=black,circle,inner sep=1.4pt] (t6) at (3,3) {};
        \node[fill=black,circle,inner sep=1.4pt] (t7) at (4,2) {};
        \node[fill=black,circle,inner sep=1.4pt] (t8) at (4,4) {};
        \node[fill=black,circle,inner sep=1.4pt] (t9) at (5,3) {};

        \draw (t1)--(1.3,1.3);
				\draw (1.7,1.7)--(t8);
        \draw (t2)--(t3);
        \draw (t4)--(t5);
        \draw (t7)--(t6);
        \draw (t8)--(t9);
        \node at (1.5,1.5) {$\ldots$};
\node at (3.8,4.2) {$r$};

        \end{tikzpicture}
\caption{A rooted binary caterpillar}\label{fig:bin}
\end{figure}
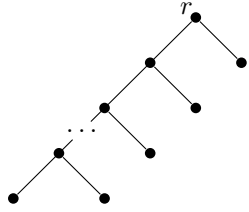

\begin{proof}
Assume that $\sigma_{T}(r)$ is maximized by $T$ and $r_1, r_2$ are two children of $r$. For any $u\in V(T)$, let $T_u$ be the subtree induced by $u$ and the descendants of $u$. We only discuss the non-trivial case where $V(T_{r_1})\ge V(T_{r_2})$ and $V(T_{r_1})\ge 3$. Let $P=rr_1t_2\ldots t_{s-1}x$ be the unique path that connects $r$ and $x$, where $x$ is a vertex at maximum distance from $r$ in $T_{r_1}$. It suffices to prove that $r_2, r_1', t_2', \ldots t_{s-1}'$ are pendent vertices, where $r_1'$ (resp. $t_i'$) is the unique neighbor of $r_1$ (resp. $t_i$) that is not in $V(P)$.

First note that by the extremality of $x$, $t_{s-1}'$ must be a leaf. For any $w\in \{r_2, r_1', t_2',\ldots, t_{s-2}'\}$ that is not a leaf, let $w'$ and $w''$ be the two children of $w$. Consider $T'=T-ww'-ww''+xw'+xw''$, we have
\begin{eqnarray*}
  \sigma_{T'}(r)-\sigma_T(r) &=& \sum_{u\in T_{w}\setminus\{w\}}(d_{T'}(u,r)-d_T(u,r)) \\
   &=& (|T_{w}|-1)(s-d_T(w,r))>0 ,
\end{eqnarray*}
a contradiction.
\end{proof}

Again a parallel statement for the number of subtrees is the following.

\begin{prop}\label{prop:rsub}
For a rooted binary tree $T$ with root $r$ and given number of vertices, $F_{T}(r)$ is minimized by the ``rooted binary caterpillar''.
\end{prop}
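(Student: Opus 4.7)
The plan is to mirror the proof of Proposition~\ref{prop:rdis}. Suppose $T$ minimizes $F_T(r)$ and let $P = rr_1t_2\ldots t_{s-1}x$ be a longest path from $r$. By extremality of $x$, the off-path child $t_{s-1}'$ of $t_{s-1}$ is already a leaf, so it suffices to prove that every off-path sibling $w \in \{r_2, r_1', t_2', \ldots, t_{s-2}'\}$ is also a pendant vertex, as this forces $T$ to be the rooted binary caterpillar.

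The key tool is the multiplicative recursion for rooted subtree counts: if a vertex $v$ has children $c_1$ and $c_2$, then $F_{T_v}(v) = (1 + F_{T_{c_1}}(c_1))(1 + F_{T_{c_2}}(c_2))$, since a subtree containing $v$ independently chooses on each side either ``nothing'' or a subtree containing the corresponding child. Setting $\alpha(v) := 1 + F_{T_v}(v)$, so that $\alpha(v) = 1 + \alpha(c_1)\alpha(c_2)$ at internal vertices and $\alpha(v) = 2$ at leaves, minimizing $F_T(r)$ becomes equivalent to minimizing $\alpha(r)$.

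Suppose for contradiction that some $w \in \{r_2, r_1', \ldots, t_{s-2}'\}$ is not a leaf, with children $w', w''$, and let $p = t_j$ be its parent on $P$ (where $0 \le j \le s-2$). Perform the same swap $T' = T - ww' - ww'' + xw' + xw''$ as in Proposition~\ref{prop:rdis}, and write $A := \alpha(w) = 1 + \alpha(w')\alpha(w'') \ge 5$. The swap turns $w$ into a leaf with $\alpha'(w) = 2$ and turns $x$ into an internal vertex with $\alpha'(x) = A$; all other values of $\alpha$ outside the segment from $x$ up to $p$ remain unchanged.

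The heart of the argument is to propagate this change along $P$ and show $\alpha(r) > \alpha'(r)$. Setting $\beta_i = \alpha(t_i)$ and $\beta_i' = \alpha'(t_i)$, one has $\beta_s'/\beta_s = A/2 > 1$; the recursion $\beta_i = 1 + \beta_{i+1}\alpha(t_i')$ (and the same for $\beta'$ whenever $i \ne j$) together with the elementary inequality $(1+ab)/(1+a) < b$ for $a > 0$ and $b > 1$ shows that the ratio $\beta_i'/\beta_i$ is strictly decreasing as $i$ decreases. Since $j \le s-2$, this yields $\beta_{j+1}'/\beta_{j+1} < A/2$, equivalently $A\beta_{j+1} > 2\beta_{j+1}'$; at $i = j$ the recursion reads $\beta_j = 1 + A\beta_{j+1}$ and $\beta_j' = 1 + 2\beta_{j+1}'$ (the $\alpha$-values at $w$ being swapped), so $\beta_j > \beta_j'$, and this inequality propagates with unchanged off-path values up to $\alpha(r) > \alpha'(r)$, contradicting minimality. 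The boundary case $j = 0$ (i.e., $w = r_2$) is handled identically, using $\alpha(r) = 1 + \beta_1\alpha(r_2)$ at the top with $\alpha(r_2) = A$ before and $2$ after. The main delicate point is the strict ratio-monotonicity, which guarantees that the ``gap'' $A/2$ created at $x$ survives the propagation up to $t_{j+1}$ and hence produces strict decrease at $t_j$.
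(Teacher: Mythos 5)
Your proof is correct, but it takes a genuinely different route from the paper's. The paper performs the same edge swap $T'=T-t_j'a-t_j'b+xa+xb$ and then compares $F_T(r)$ with $F_{T'}(r)$ combinatorially: it reduces the difference to $(F_{T_{t_j'}}(t_j')-1)(|A|-|B|)$, where $A$ and $B$ are families of subtrees of the skeleton $S=T-(T_{t_j'}-t_j')$ containing $r,t_j'$ but not $x$, respectively $r,x$ but not $t_j'$, and exhibits an explicit injection $R\mapsto R-t_{s-1}x+t_jt_j'$ from $B$ into $A$ that is not surjective. You instead work with the multiplicative recursion $\alpha(v)=1+\alpha(c_1)\alpha(c_2)$, $\alpha(\text{leaf})=2$, and propagate the perturbation up the path via the ratio inequality $(1+ab)/(1+a)<b$ for $a>0$, $b>1$. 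Your version is more quantitative and arguably cleaner: it makes transparent exactly where the hypothesis $j\le s-2$ enters (at least one intermediate path vertex is needed so that the initial ratio $A/2$ strictly decays before reaching $t_j$, which is what makes $\beta_j>\beta_j'$ strict), it handles $r_2$ uniformly rather than deferring it as the paper does, and it avoids the set-counting bookkeeping (where the paper's notation $F_{T_w}(w)$ is in fact a slip for $F_{T_{t_j'}}(t_j')$). The paper's injection argument, on the other hand, is more robust in settings where a clean product formula is unavailable. Two small points to tidy up: state explicitly that you are in the non-trivial case $|V(T_{r_1})|\ge 3$ (so $s\ge 2$ and the $w=r_2$ boundary case indeed has an intermediate vertex $r_1$), and note that the ratio $\beta_i'/\beta_i$ remains greater than $1$ at every step (which follows from the same inequality) since that is what licenses the next application of the monotonicity step.
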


\begin{proof}
Assume that $F_{T}(r)$ is minimized by $T$ and $r_1, r_2$ are the two children of $r$. Similar to the previous proof we assume that $V(T_{r_1})\ge V(T_{r_2})$, $V(T_{r_1})\ge 3$ and let $P=rr_1t_2\ldots t_{s-1}x$ be the unique path that connects $r$ and a furthest vertex $x$. Again it suffices to show that the neighbors $r_2, r_1', t_2', \ldots t_{s-1}'$ of the vertices on $P$ are leaves.
Like before, $t_{s-1}'$ must be a leaf.

For any $1\le j\le s-2$, supposing (for contradiction) that $a$ and $b$ are the two children of $t_j'$, let $T'=T-t_j'a-t_j'b+xa+xb$. We now examine the subtrees containing $r$ in $T$ and $T'$. First it is easy to see that the subtrees not containing $x$ or $t'_j$ stay the same in $T$ and $T'$.

Let $S = T-(T_{t'_j}-t'_j)$, from $T$ to $T'$ in order to compare the number of subtrees containing $r$ we only need to compare the number of subtrees containing $r$, $t'_j$ in $T$ and $r$, $x$ in $T'$. It is then easy to see
$$ F_T(r)-F_{T'}(r)=(F_{T_w}(w)-1)(|A|-|B|), $$
where $A$ is the set of subtrees of $S$ that contain $r,t'_j$ but not $x$, and $B$ is the set of subtrees of $S$ that contain $r,x$ but not $t'_j$.

To compare $|A|$ and $|B|$ we establish the following map:
$$ f:B\rightarrow A: \hbox{ {\it For any $R \in B$, let $f(R)=R-t_{s-1}x+t_jt_j'$. } } $$

It is easy to see that $f$ is an injection but not a bijection, and hence $|A|>|B|$. Consequently $F_T(r)-F_{T'}(r) > 0$, a contradiction.

Similar arguments apply to the case of $r_2$ being an internal vertex. We skip the details.
\end{proof}

For distance problems we also state the following which is a direct consequence of Proposition~\ref{prop:rdis}.

\begin{prop}\label{prop:rdis'}
For a rooted binary tree $T$ with root $r$ and $r_1$, $r_2$ as the children of $r$, with given numbers of descendants of $r_1$ and $r_2$ respectively, $\sigma_{T}(r)$ is maximized when each subtree $T_{r_i}$($i=1,2$), induced by $r_i$ and its descendants (rooted at $r_i$), is a rooted binary caterpillar.
\end{prop}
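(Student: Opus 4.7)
The plan is to show that this statement is essentially an immediate corollary of Proposition~\ref{prop:rdis}, by decomposing $\sigma_T(r)$ into independent contributions from the two subtrees $T_{r_1}$ and $T_{r_2}$. First I would partition $V(T)\setminus\{r\}$ according to which subtree $T_{r_i}$ a vertex lies in, and observe that for any $u\in V(T_{r_i})$ the unique $u$--$r$ path in $T$ passes through $r_i$, so $d_T(u,r) = d_{T_{r_i}}(u,r_i) + 1$. Summing over all vertices gives
\[
\sigma_T(r) = \bigl(\sigma_{T_{r_1}}(r_1) + |V(T_{r_1})|\bigr) + \bigl(\sigma_{T_{r_2}}(r_2) + |V(T_{r_2})|\bigr),
\]
and since $|V(T_{r_1})|$ and $|V(T_{r_2})|$ are fixed by hypothesis, maximizing $\sigma_T(r)$ decouples into independently maximizing $\sigma_{T_{r_i}}(r_i)$ for $i=1,2$.

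Next I would verify that each $T_{r_i}$, viewed as a rooted tree with root $r_i$, satisfies the rooted-binary-tree convention used in the paper so that Proposition~\ref{prop:rdis} may be applied directly. If $r_i$ is internal in $T$, it has degree $3$ there (its parent $r$ plus two children), and removing the edge $rr_i$ leaves $r_i$ with exactly two neighbors in $T_{r_i}$, which matches the convention that the root of a rooted binary tree has degree $2$; if $r_i$ is a leaf in $T$, then $T_{r_i}$ is the single vertex $r_i$ and there is nothing to maximize. Applying Proposition~\ref{prop:rdis} to each $T_{r_i}$ (of the prescribed order) then yields that $\sigma_{T_{r_i}}(r_i)$ is maximized precisely when $T_{r_i}$ is a rooted binary caterpillar, which gives the claimed conclusion.

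The argument is essentially bookkeeping and I do not anticipate a substantive obstacle; the only minor point worth checking carefully is the degree condition at $r_i$ so that the rooted-binary-tree hypothesis of Proposition~\ref{prop:rdis} is genuinely satisfied by each $T_{r_i}$ at its new root.
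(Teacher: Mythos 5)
Your proposal is correct and follows essentially the same route as the paper: decompose $\sigma_T(r)=\sigma_{T_{r_1}}(r_1)+|V(T_{r_1})|+\sigma_{T_{r_2}}(r_2)+|V(T_{r_2})|$ and apply Proposition~\ref{prop:rdis} to each subtree independently. Your extra check that each $r_i$ has the right degree to serve as the root of a rooted binary tree is a sensible point of care that the paper leaves implicit.
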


\begin{proof}
With the above notations direct computation yields
\begin{eqnarray*}
  \sigma_T(r)&=&\sum_{w\in V(T)}d_T(w,r)\\
   &=& \sum_{w\in V(T_{r_1})}(d_T(w,r_1)+1)+\sum_{w\in V(T_{r_2})}(d_T(w,r_2)+1) \\
   &=&\sigma_{T_{r_1}}(r_1)+|T_{r_1}|+\sigma_{T_{r_2}}(r_2)+|T_{r_1}|,
\end{eqnarray*}
maximized if and only if both $\sigma_{T_{r_1}}(r_1)$ and $\sigma_{T_{r_2}}(r_2)$ are maximized. Thus both $T_{r_1}$ and $T_{r_2}$ are rooted binary caterpillars by Proposition~\ref{prop:rdis}.
\end{proof}

\section{Minimum $\sigma_T(w)/\sigma_T(v)$ for $w \in L(T)$ and $v\in C(T)$}
\label{sec:min}

We first establish some characteristics of the tree $T$, the vertices $w \in L(T)$ and $v\in C(T)$, that achieves the minimum $\sigma_T(w)/\sigma_T(v)$.

\subsection{$d_T(w,v)\geq 3$}

In this case, let $P(w,v)$ denote the path connecting $w$ and $v$. Let $x$ be the unique neighbor of $v$ on $P(w,v)$ and let $T_x$ ($T_v$) denote the component containing $x$ ($v$) in $T - xv$. Since $d_T(w,v) \geq 3$, $w \neq x$. Further define $T_w:= T_x \cup \{xv\}$ (Figure~\ref{fig:tv_tw}).

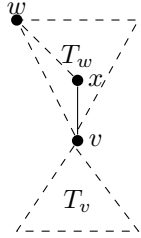
\begin{figure}[htbp]
\centering
    \begin{tikzpicture}[scale=.8]
        \node[fill=black,circle,inner sep=1.5pt] (t1) at (0,0) {};
        \node[fill=black,circle,inner sep=1.5pt] (t2) at (0,1) {};
        \node[fill=black,circle,inner sep=1.5pt] (t4) at (-1,2) {};

        \draw (t1)--(t2);
        \draw [dashed] (t2)--(-1,2);
        \draw [dashed] (t1)--(-1,2)--(1,2)--cycle;
        \draw [dashed] (t1)--(-1,-1.5)--(1,-1.5)--cycle;

        \node at (.3,0) {$v$};
        \node at (.3,1) {$x$};
        \node at (-1,2.2) {$w$};

        \node at (0,-1) {$T_v$};
        \node at (0,1.4) {$T_w$};

        \end{tikzpicture}
\caption{The vertices $v$, $x$, $w$ and components $T_v$, $T_w$.}\label{fig:tv_tw}
\end{figure}

Then, by definition we have
\begin{equation}
\label{eq:0}
\frac{\sigma_T(w)}{\sigma_T(v)}= \frac{\sigma_{T_w}(w) + (n_v-1) d + \sigma_{T_v}(v)}{\sigma_{T_w}(v) + \sigma_{T_v}(v)} = 1 + \frac{\sigma_{T_w}(w) + (n_v-1) d - \sigma_{T_w}(v)}{\sigma_{T_w}(v) + \sigma_{T_v}(v)},
\end{equation}
where $n_v = |V(T_v)|$ and $d = d_T(w,v)$.

Since $\frac{\sigma_T(w)}{\sigma_T(v)} > 1$ in non-trivial trees, to minimize \eqref{eq:0} is equivalent to minimizing the positive expression
\begin{equation}
\label{eq:1}
\frac{\sigma_{T_w}(w) + (n_v-1) d - \sigma_{T_w}(v)}{\sigma_{T_w}(v) + \sigma_{T_v}(v)} .
\end{equation}

Now let $P(w,v) = v u_1 u_2 \ldots u_{d-1} w$. Let $T_i$ denote the component containing $u_i$ in $T-E(P(w,v))$ for $1\leq i \leq d-1$. Since $T$ is a binary tree, $u_i$ ($1\le i\le d-1$) has a neighbor not on $P(w,v)$. Let this neighbor of $u_i$ be $u_i'$. If there exists $1\le j\le d-2$ such that $u_j'\notin L(T)$. Let the two other neighbors of $u_j'$ be $a$ and $b$, consider the tree
$$ T' = T - u_j'a - u_j'b + u''a + u''b, $$
where $u'' \in L(T)$ is a leaf vertex in $T_{d-1}$. Simply,  put $T'$ to be the tree obtained from $T$ by ``moving'' a branch $A$ from the neighbor of $u_j$ to a leaf in $T_{d-1}$ (since $T$ is a binary tree, such a leaf must exist). See Figure~\ref{fig:move}.

\begin{figure}[htbp]
\centering
    \begin{tikzpicture}[scale=.8]
        \node[fill=black,circle,inner sep=1pt] (t1) at (0,0) {};
        \node[fill=black,circle,inner sep=1pt] (t2) at (0,1) {};
        \node[fill=black,circle,inner sep=1pt] (t3) at (0,2) {};
        \node[fill=black,circle,inner sep=1pt] (t4) at (0,3) {};
\node[fill=black,circle,inner sep=1pt] (t21) at (1,1) {};
\node[fill=black,circle,inner sep=1pt] (t22) at (2,2.4) {};

        \draw [dashed] (t3)--(t1);
        \draw (t2)--(1,1);
        \draw (t4)--(t3)--(1,2);
        \draw [dashed] (1,2)--(2,1.6)--(2,2.4)--cycle;
        \draw [dotted] (1,1)--(2,.6)--(2,1.4)--cycle;
        \draw [dotted] (2,2.4)--(3,2)--(3,2.8)--cycle;
        \draw [dashed](t1)--(-1,-1.5)--(1,-1.5)--cycle;

        \node at (0,-1) {$T_v$};
        \node at (0,3.2) {$w$};
\node at (-.3,1) {$u_j$};
\node at (-.45,2) {$u_{d-1}$};
\node at (1,1.4) {$u_j'$};
\node at (2,2.7) {$u''$};
        \draw (2.1,1) edge[out=0,in=180,->, dashed] (2.6,2.05);

        \node[fill=black,circle,inner sep=1pt] (s1) at (0+6,0) {};
        \node[fill=black,circle,inner sep=1pt] (s2) at (0+6,1) {};
        \node[fill=black,circle,inner sep=1pt] (s3) at (0+6,2) {};
        \node[fill=black,circle,inner sep=1pt] (s4) at (0+6,3) {};
\node[fill=black,circle,inner sep=1pt] (s21) at (1+6,1) {};

        \draw [dashed] (s3)--(s1);
        \draw (s2)--(1+6,1);
        \draw (s4)--(s3)--(1+6,2);
        \draw [dashed] (1+6,2)--(3+6,1.2)--(3+6,2.8)--cycle;
        \draw [dashed] (s1)--(-1+6,-1.5)--(1+6,-1.5)--cycle;

\node at (-.3+6,1) {$u_j$};
\node at (-.3+6,0) {$v$};
\node at (1+6,1.4) {$u_j'$};

        \node at (0+6,-1) {$T_v$};
        \node at (1.7, 1) {$A$};
        \node at (2.7,2.4) {$A$};
        \node at (0+6,3.2) {$w$};

        \end{tikzpicture}
\caption{``Moving'' a branch in $T_w$ towards $w$: the operation on $T$ (left) and the resulted $T'$, $T'_w$ (right).}\label{fig:move}
\end{figure}
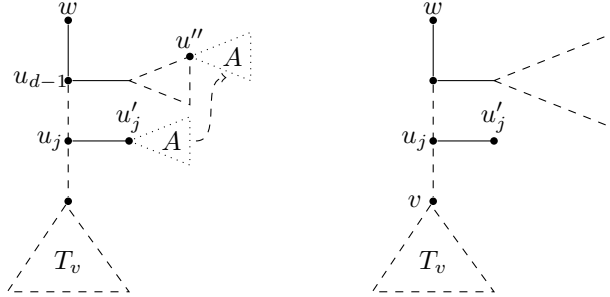

From $T$ to $T'$, we claim the following.

\begin{lemma}\label{lem:1}
With $T$, $T_w$, $T'$, $T'_w$ defined as in Figure~\ref{fig:move}, we have
$$ \sigma_{T'_w}(v) > \sigma_{T_w}(v) $$
and
$$ \sigma_{T'_w}(w) - \sigma_{T'_w}(v) < \sigma_{T_w}(w) - \sigma_{T_w}(v) .$$
\end{lemma}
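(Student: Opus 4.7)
The plan is to track, for each vertex $y\in V(T_w)$, how its distances to $v$ and to $w$ change between $T$ and $T'$. Since $T'$ is obtained from $T$ by deleting the edges $u_j'a,\,u_j'b$ and inserting the edges $u''a,\,u''b$, the only vertices whose distances to $v$ or $w$ are altered are those in $A:=V(T_a)\cup V(T_b)$, namely the vertices moved from under $u_j'$ to under $u''$. Every other vertex of $T_w$---in particular $u_j'$ itself, which stays in place but becomes a leaf of $T'$---keeps the same distances to $v$ and to $w$, so contributes $0$ to each of the two differences appearing in the conclusion. Thus it suffices to sum the per-vertex changes over $y\in A$.

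For such a $y$, let $k:=d_T(u_{d-1},u'')\ge 1$ (positive because $u_{d-1}$ is internal, so $T_{d-1}$ is nontrivial). Since the branch $A$ is transplanted rigidly, $d_T(y,u_j')=d_{T'}(y,u'')$, and splitting each relevant path through either $u_j'$ or $u''$ yields
\begin{align*}
d_T(y,v) &= d_T(y,u_j')+(j+1), & d_T(y,w)  &= d_T(y,u_j')+(d-j+1),\\
d_{T'}(y,v) &= d_T(y,u_j')+(d-1+k), & d_{T'}(y,w) &= d_T(y,u_j')+(k+1).
\end{align*}

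For the first inequality, the per-vertex change $d_{T'}(y,v)-d_T(y,v)=d-j+k-2$ is at least $1$ under the standing hypotheses $j\le d-2$ and $k\ge 1$; summing over $y\in A$ (noting $|A|\ge 2$) gives $\sigma_{T'_w}(v)-\sigma_{T_w}(v)\ge |A|\ge 2>0$. For the second inequality, for each $y\in A$,
\[
\bigl(d_{T'}(y,w)-d_{T'}(y,v)\bigr)-\bigl(d_T(y,w)-d_T(y,v)\bigr)=(2-d)-(d-2j)=2(j+1-d)\le -2,
\]
again using $j\le d-2$; summing over $y\in A$ produces the strictly negative total required.

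The main obstacle is conceptual rather than computational: one has to verify that only the vertices of $A$ see any distance change, and then recognize that the hypothesis $1\le j\le d-2$ is tight---it is exactly what simultaneously delivers the slack $d-j\ge 2$ needed to push distances from $v$ outward (first inequality) and to make $j+1-d$ strictly negative (second inequality). If $j$ were allowed to equal $d-1$ this particular move would fail to yield either monotonicity, which explains why the construction is restricted to $1\le j\le d-2$.
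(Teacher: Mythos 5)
Your proof is correct and takes essentially the same route as the paper's: both compute the common per-vertex change in distance to $v$ and to $w$ for the transplanted vertices and sum over the moved branch, obtaining the two sign conditions from $j\le d-2$ and $d_T(u'',u_{d-1})\ge 1$. You work out the general $j$ (the paper only illustrates $j=1$) and explicitly note that unmoved vertices contribute nothing, but the argument is the same.
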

\begin{proof}
We illustrate our proof with the case $j=1$ and $u'=u'_1$ in Figure~\ref{fig:move}. First by definition we have
\begin{eqnarray}
  \sigma_{T'_w}(v)-\sigma_{T_w}(v) &=& \sum_{z\in A\setminus\{u'\}}(d_T(u'',v)-d_T(u',v)) \nonumber\\
  &=& (|V(A)|-1)(d-1+d_T(u'', u_{d-1})-2)  \nonumber\\
 &=& (|V(A)|-1)(d-3+d_T(u'', u_{d-1})). \label{eqa3}
\end{eqnarray}
Since $|V(A)|>1$,  $d\geq 3$ and $d_T(u'', u_{d-1})\ge 1$, we have
$$
\sigma_{T'_w}(v) > \sigma_{T_w}(v)
$$
as claimed. Similarly,
\begin{eqnarray}
  \sigma_{T'_w}(w)-\sigma_{T_w}(w) &=& \sum_{z\in A\setminus\{u'\}}(d_T(u'',w)-d_T(u',w))   \nonumber\\
   &=& (|V(A)|-1)(1+d_T(u'', u_{d-1})-d)   \nonumber\\
   &=& (|V(A)|-1)(d_T(u'', u_{d-1})-d+1) . \label{eqa4}
\end{eqnarray}
Combining (\ref{eqa3}) and (\ref{eqa4}), after simplification we have
$$
(\sigma_{T'_w}(v)-\sigma_{T_w}(v))-(\sigma_{T'_w}(w)-\sigma_{T_w}(w))=(|V(A)|-1)(2d-4)>0 ,
$$
which is equivalent to
$$ \sigma_{T'_w}(w) - \sigma_{T'_w}(v) < \sigma_{T_w}(w) - \sigma_{T_w}(v) .$$
\end{proof}

Since $T_v$ and $P(w,v)$ stay the same, from Lemma~\ref{lem:1} we see that, from $T$ to $T'$ the numerator of \eqref{eq:1} decreases and the denominator of \eqref{eq:1} increases. Following the same logic one can ``move'' branches from any of the $T_i$ ($1\leq i \leq d-2$) to $T_{d-1}$ and the value of \eqref{eq:1}   decreases. Thus, to minimize \eqref{eq:1} we may assume that $u_i$ has a leaf neighbor for $1\leq i \leq d-2$.

\begin{lemma}
\label{lem:2}
In the tree described above, if $d_T(w,v) \geq 3$, then
$$ \sigma_T(u_1') < \sigma_T(w) . $$
\end{lemma}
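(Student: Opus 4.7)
The approach is to compute $\sigma_T(w) - \sigma_T(u_1')$ directly by expanding
\[
\sigma_T(w) - \sigma_T(u_1') = \sum_{x \in V(T)} \bigl(d_T(x,w) - d_T(x,u_1')\bigr)
\]
and grouping terms according to a natural partition of $V(T)$. Under the current assumptions (each $u_i'$ is a leaf for $1 \le i \le d-2$), the tree decomposes as
\[
V(T) = V(T_v) \,\cup\, \{u_1,\ldots,u_{d-1}\} \,\cup\, \{u_1'\} \,\cup\, \{u_2',\ldots,u_{d-2}'\} \,\cup\, V(T_{d-1}) \,\cup\, \{w\},
\]
where $T_{d-1}$ is the subtree hanging off $u_{d-1}'$. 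Writing $n_v = |V(T_v)|$ and $n_{d-1} = |V(T_{d-1})|$, every distance involved is read directly off the path $P(w,v)$.

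The per-block computation is routine. A vertex of $T_v$ at distance $k$ from $v$ contributes $(k+d) - (k+2) = d-2$, giving $n_v(d-2)$; symmetrically, a vertex of $T_{d-1}$ at distance $k$ from $u_{d-1}'$ contributes $(k+2)-(k+d) = 2-d$, giving $n_{d-1}(2-d)$. The path vertex $u_i$ contributes $d-2i$ and the internal leaf $u_i'$ (for $2 \le i \le d-2$) also contributes $d-2i$; the two resulting arithmetic progressions $\sum_{i=1}^{d-1}(d-2i)$ and $\sum_{i=2}^{d-2}(d-2i)$ are each symmetric about $0$ and therefore vanish. The boundary contributions $+d$ from $u_1'$ itself and $-d$ from $w$ cancel, producing the clean identity
\[
\sigma_T(w) - \sigma_T(u_1') = (n_v - n_{d-1})(d-2).
\]

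Since $d \ge 3$ the factor $d-2$ is strictly positive, so the proof reduces to showing $n_v > n_{d-1}$. This is where $v \in C(T)$ finally enters: applying Proposition~\ref{prop:dis} to the edge $vu_1$ yields $n_v \ge n - n_v$, i.e.\ $n_v \ge n/2$. A total vertex count gives $n = n_v + n_{d-1} + (2d-2)$, whence $n_{d-1} = n - 2d + 2 - n_v \le n/2 - 2d + 2 \le n/2 - 4 < n_v$, and the desired strict inequality follows. The only real obstacle is bookkeeping: verifying the precise index ranges so the two ``middle'' sums cancel and making sure each piece of $V(T)$ (in particular the lone vertex $u_1'$ versus the subtree $T_{d-1}$ rooted at $u_{d-1}'$, which need not be a leaf) is counted exactly once; once the identity is established, the conclusion is immediate from the centroid property.
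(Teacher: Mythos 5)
Your proof is correct, and it reaches the conclusion by a genuinely sharper route than the paper's. Your bookkeeping checks out: the partition covers $V(T)$ exactly once, the block contributions $n_v(d-2)$, $n_{d-1}(2-d)$, the two vanishing symmetric sums $\sum_{i=1}^{d-1}(d-2i)$ and $\sum_{i=2}^{d-2}(d-2i)$, and the cancelling $\pm d$ terms are all right, so the identity $\sigma_T(w)-\sigma_T(u_1')=(n_v-n_{d-1})(d-2)$ holds, and the count $n=n_v+n_{d-1}+2d-2$ together with $n_v\ge n/2$ (Proposition~\ref{prop:dis} applied to the edge $vu_1$) gives $n_{d-1}\le n/2-4<n_v$. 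The paper instead avoids exact computation: it isolates the set $V^*$ of vertices at least as far from $u_1'$ as from $w$, notes $V^*\subseteq V(T)\setminus(V(T_v)\cup\{u_1,u_1'\})$ and that each such vertex costs at most $d-2$, and closes with the same centroid inequality. The trade-off is that your argument leans on the structural reduction already performed (every $u_i'$ with $i\le d-2$ is a leaf), which is legitimately available ``in the tree described above,'' and in return yields a clean closed-form difference; the paper's cruder estimate does not need that reduction at all and would survive even if the interior branches $T_2,\dots,T_{d-2}$ were arbitrary. Both hinge on the same final ingredient, so the proofs are cousins rather than strangers, but yours is an exact identity where the paper settles for a lower bound.
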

\begin{proof}
Let $V^*=\{z: d_T(z, u')\ge d_T(z, w)\}$ be the collection of vertices that are at least as far from $u'$ as from $w$. It is easy to see that $V^*\subset V(T)\setminus \left( V(T_v) \cup \{u_1, u_1'\} \right)$ and for any $z \in V^*$, $d_T(z,u_1')-d_T(z, w)\le d-2$.
Then
\begin{eqnarray*}
  &&\sigma_T(w)-\sigma_T(u_1')\\
  &=& \sum_{z\in T}(d_T(z,w)-d_T(z,u_1'))  \\
   &\ge& \sum_{z\in V(T_v)\cup\{u_1, u_1'\}}(d_T(z,w)-d_T(z,u_1'))+ \sum_{z\in V^*}(d_T(z,w)-d_T(z,u_1'))  \\
   &\ge& (d-2)|V(T_v)|+(d-2)+d-(d-2)|V^*|\\
   &\ge& (d-2)\left( |V(T_v)|+2\right) + 2 - (d-2)(n-|V(T_v)|-2)\\
   &=& (d-2)\left(2|V(T_v)|+4-n\right)+2>0,
\end{eqnarray*}
where the last inequality holds as $|V(T_v)|\ge n-|V(T_v)|$ by Proposition \ref{prop:dis}.
\end{proof}

Since our goal is to minimize \eqref{eq:1} (and naturally picking the leaf vertex with the minimum $\sigma_T(\cdot)$), Lemma~\ref{lem:2} implies that we cannot have $d_T(w,v)\geq 3$.

\subsection{$d_T(w,v)=2$}

Consequently in our extremal structure $w$ must be the neighbor of $x$ as in Figure~\ref{fig:wx1}.

\begin{figure}[htbp]
\centering
    \begin{tikzpicture}[scale=.8]
        \node[fill=black,circle,inner sep=1pt] (t1) at (0,0) {};
        \node[fill=black,circle,inner sep=1pt] (t2) at (0,1) {};
        \node[fill=black,circle,inner sep=1pt] (t3) at (0,2) {};
        \node[fill=black,circle,inner sep=1pt] (t4) at (-1,1) {};

        \draw (t1)--(t2)--(t4)--(t2)--(t3);
        \draw [dashed] (t3)--(-1,3)--(1,3)--cycle;
        \draw [dashed] (t1)--(-1,-1.5)--(1,-1.5)--cycle;

        \node at (.3,0) {$v$};
        \node at (.3,1) {$x$};
        \node at (-1.3,1) {$w$};

        \node at (0,-1) {$T_v$};

        \end{tikzpicture}
\caption{The extremal structure with $d_T(w,v)=2$.}\label{fig:wx1}
\end{figure}
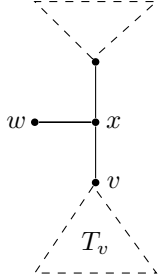

Now $d_{T_w}(w) - d_{T_w}(v) = 0$, $d=2$, \eqref{eq:0} can be rewritten as
\begin{equation}
\label{eq:1'}
\frac{\sigma_T(w)}{\sigma_T(v)}= 1+ \frac{2(n_v - 1)}{\sigma_T(v)} = 1+ \frac{2(n_v - 1)}{\sigma_{T_w}(v) + \sigma_{T_v}(v)} .
\end{equation}

For one thing, with fixed $n_v$ (and hence fixed numbers of vertices in both $T_w$ and $T_v$), \eqref{eq:1'} is obviously minimized when both $\sigma_{T_w}(v)$ and $\sigma_{T_v}(v)$ are maximized. By Propositions~\ref{prop:rdis} and \ref{prop:rdis'}, our extremal structure is a ``3-way caterpillar'' as shown in Figure~\ref{fig:3cat}.

\begin{figure}[htbp]
\centering
    \begin{tikzpicture}[scale=.8]
        \node[fill=black,circle,inner sep=1pt] (t1) at (0,0) {};
        \node[fill=black,circle,inner sep=1pt] (t2) at (0,1) {};
        \node[fill=black,circle,inner sep=1pt] (t3) at (0,2) {};
        \node[fill=black,circle,inner sep=1pt] (t4) at (0,3) {};

        \node[fill=black,circle,inner sep=1pt] (t5) at (1,0) {};
        \node[fill=black,circle,inner sep=1pt] (t6) at (2,0) {};
        \node[fill=black,circle,inner sep=1pt] (t7) at (-1,0) {};
        \node[fill=black,circle,inner sep=1pt] (t8) at (-2,0) {};

        \draw (-3,0)--(t8)--(-2,-1);
        \draw (3,0)--(t6)--(2,-1);
        \draw (-1,-1)--(t7)--(t5)--(1,-1);
        \draw (t1)--(t3)--(-1,2);
        \draw (t2)--(-1,1);
        \draw (-1,3)--(t4)--(0,4);

        \draw [dashed] (t3)--(t4);
        \draw [dashed] (t5)--(t6);
        \draw [dashed] (t7)--(t8);

        \node at (0,-.3) {$v$};
        \node at (.3,1) {$x$};
        \node at (-1.3,1) {$w$};

        \end{tikzpicture}
\caption{The extremal tree $T_1$ with $d_T(w,v)=2$.}\label{fig:3cat}
\end{figure}
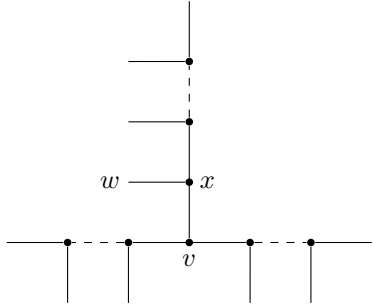

Furthermore, by Proposition~\ref{prop:dis} we have $n_v> n - n_v$ since $v \in C(T)$ and $x \notin C(T)$. Note that $n$ is even (since $T$ is a binary tree), we must have $n_v-2\ge n - n_v$. Hence

\begin{equation}
\label{eq:1''}
\frac{\sigma_T(w)}{\sigma_T(v)}= 1+ \frac{2(n_v - 1)}{\sigma_T(v)} > 1+ \frac{|V(T)| - 1}{\sigma_T(v)}.
\end{equation}

\subsection{$d_T(w,v)=1$}

Now in this case we have, exactly, that
\begin{equation}\label{eq:1'''}
\frac{\sigma_T(w)}{\sigma_T(v)} = 1+ \frac{|V(T)| - 1}{\sigma_T(v)}
\end{equation}
is minimized when $\sigma_T(v)$ is maximized. With given number of vertices on each side of $v$, Proposition~\ref{prop:rdis'} implies that \eqref{eq:1'''} is minimized by when $T$ is a binary caterpillar (Figure~\ref{fig:opt}).

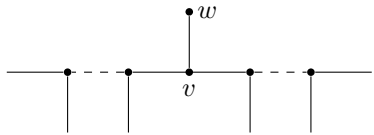
\begin{figure}[htbp]
\centering
    \begin{tikzpicture}[scale=.8]
        \node[fill=black,circle,inner sep=1pt] (t1) at (0,0) {};
        \node[fill=black,circle,inner sep=1pt] (t2) at (0,1) {};
        \node[fill=black,circle,inner sep=1pt] (t5) at (1,0) {};
        \node[fill=black,circle,inner sep=1pt] (t6) at (2,0) {};
        \node[fill=black,circle,inner sep=1pt] (t7) at (-1,0) {};
        \node[fill=black,circle,inner sep=1pt] (t8) at (-2,0) {};

        \draw (-3,0)--(t8)--(-2,-1);
        \draw (3,0)--(t6)--(2,-1);
        \draw (-1,-1)--(t7)--(t5)--(1,-1);
        \draw (t1)--(t2);

        \draw [dashed] (t5)--(t6);
        \draw [dashed] (t7)--(t8);

        \node at (0,-.3) {$v$};
        \node at (.3,1) {$w$};

        \end{tikzpicture}
\caption{The binary caterpillar $T_2$.}\label{fig:opt}
\end{figure}

\subsection{The extremal ratio}

From the above discussion we only need to compare $\frac{\sigma_T(w)}{\sigma_T(v)}$ for $T_1$ (Figure~\ref{fig:3cat}) and $T_2$ (Figure~\ref{fig:opt}). It will be shown that $T_2$ achieves a smaller value for $\frac{\sigma_T(w)}{\sigma_T(v)}$.

\begin{lemma}
\label{lem:com}
With optimized structures $T_1$ and $T_2$ of order $n$, we have
$$ \sigma_{T_2}(v) \geq \sigma_{T_1}(v) $$
with equality if and only if $T_1\cong T_2$.
\end{lemma}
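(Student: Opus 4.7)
The plan is to reduce the inequality $\sigma_{T_2}(v) \geq \sigma_{T_1}(v)$ to a comparison of sums of squared arm sizes, by means of a closed-form expression for the distance function at the root of a rooted binary caterpillar. A direct summation along the spine shows that a rooted binary caterpillar of odd order $m=2k-1$ with root $r$ satisfies
\[
\sigma_{\mathrm{cat}}(r) = \sum_{i=0}^{k-1} i + \sum_{i=1}^{k-1} i = k(k-1) = \frac{m^2-1}{4}.
\]
By Propositions~\ref{prop:rdis} and~\ref{prop:rdis'}, in both $T_1$ and $T_2$ the vertex $v$ has degree three with each of its three branches being a rooted binary caterpillar. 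Writing $s_1,s_2,s_3$ for the three (necessarily odd) arm sizes, with $s_1+s_2+s_3=n-1$, and adding one unit of distance for every vertex past $v$, one obtains the unified identity
\[
\sigma_T(v) = \sum_{i=1}^{3}\left(s_i+\frac{s_i^2-1}{4}\right) = \frac{s_1^2+s_2^2+s_3^2}{4} + n - \frac{7}{4}.
\]
It therefore suffices to compare the maxima of $s_1^2+s_2^2+s_3^2$ under the respective constraints.

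Next I would record the constraints. In both cases $v\in C(T)$ together with Proposition~\ref{prop:dis} forces $s_i\leq n/2$ for each $i$. For $T_1$, the assumption $d_T(w,v)=2$ together with the oddness of arm sizes gives $s_3\geq 3$, and the strict bound $n_v>n-n_v$ used in~\eqref{eq:1''} (arising because $x\notin C(T)$) gives $s_3\leq n/2-1$. For $T_2$, one arm is the singleton $w$, and the remaining two sizes $a,b$ satisfy $a+b=n-2$ with $a,b\leq n/2$. Let $M$ denote the largest odd integer at most $n/2$. By convexity of the sum of squares on a simplex with individual upper bounds, the maximum is attained at an extreme configuration: for $T_2$ this yields arms $(M,\,n-2-M,\,1)$; for $T_1$ one sets $s_1=M$ and drives $s_3$ up to its largest admissible odd value, which equals $M$ when $n\equiv 0\pmod{4}$ (so $M=n/2-1$) and $M-2$ when $n\equiv 2\pmod{4}$ (so $M=n/2$), forcing $s_2=1$ in either case. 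In both residue classes the resulting arm-size multisets coincide with those of $T_2$.

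It then follows that the optimized $T_1$ and $T_2$ produce the same value of $s_1^2+s_2^2+s_3^2$, hence $\sigma_{T_1}(v)=\sigma_{T_2}(v)$ precisely when $T_1\cong T_2$ as unlabeled trees (since $\sigma_T(v)$ depends only on the multiset of arm sizes at $v$), and $\sigma_{T_1}(v)<\sigma_{T_2}(v)$ strictly in every other case. The main obstacle in carrying this out rigorously is the parity case analysis combined with showing the optimum is at the correct endpoint: the convex quadratic $f(s_3)=M^2+(n-1-M-s_3)^2+s_3^2$ on the admissible interval $[3,\,n/2-1]$ attains its maximum at an endpoint, and one has to verify that this endpoint is the \emph{upper} one (rather than $s_3=3$), and that the resulting configuration respects every odd/upper-bound constraint simultaneously. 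A short explicit computation in each case $n\equiv 0,2\pmod{4}$ settles this and completes the proof.
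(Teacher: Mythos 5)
Your proposal is correct and takes essentially the same route as the paper: both derive a closed form for $\sigma_T(v)$ in terms of the odd arm sizes of the rooted binary caterpillar branches at $v$ (the paper's $\frac14\sum_i(|V(T_t)|+2)^2-\frac{15}{4}$ is algebraically identical to your $\frac14\sum_i s_i^2+n-\frac74$), and then compare $T_1$ and $T_2$ under the centroid constraints $s_i\le n/2$ and $3\le s_3<n/2$ via a case analysis on $n\bmod 4$. If anything you are more explicit than the paper about the final step---the paper records the forced arm sizes of $T_2$ and dismisses the comparison as ``simple algebra,'' whereas your convexity/endpoint maximization of $\sum_i s_i^2$ over the admissible $T_1$ configurations is a correct and welcome way to fill in that gap, including the observation that the maximizing $T_1$ coincides with $T_2$ as an unlabeled tree, which yields the equality case.
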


\begin{proof}
First we consider $\sigma_{T_1}(v)$.
Let the other two neighbors of $v$ be $x'$ and $x''$, we denote by $T_x$, $T_{x'}$ and $T_{x''}$ the components containing $x$, $x'$ and $x''$ (respectively) in $T-v$. For $t\in \{x,x',x''\}$, $|V(T_t)|$ is odd since $T_t$ is a rooted binary caterpillar. Furthermore, as $v\in C(T)$ and $x\notin C(T)$, we have that $3\le |V(T_x)|< \frac{n}{2}$ and $|V(T_{x'})|, |V(T_{x''})| \leq \frac{n}{2}$.

By definition we have
\begin{eqnarray*}
  \sigma_{T_1}(v) &=& \sum_{t\in \{x,x',x''\}} \left(1+2\left(2+3+\cdots+\frac{|V(T_t)|+1}{2}\right)\right) \\
  &=&\sum_{t\in \{x,x',x''\}} \frac{(|V(T_t)|+2)^2-5}{4}\\
  &=& \left( \frac14 \sum_{t\in \{x,x',x''\}} (|V(T_t)|+2)^2 \right) -\frac{15}{4}.
\end{eqnarray*}

Next we consider $\sigma_{T_2}(v)$.
Let the other two neighbor of $v$ are $y'$ and $y''$, we use $T_{y'}$ and $T_{y''}$ to denote the components containing $y'$ and $y''$ in $T-v$, respectively. Further assume, without loss of generality, that $|V(T_{y'})|\ge |V(T_{y''})|$. By the fact that $v\in C(T)$ and Proposition~\ref{prop:dis}, we have
\begin{equation*}
    |V(T_{y'})|=\left\{
       \begin{array}{ll}
         \frac{n-2}{2}, & \hbox{$n\equiv 0 \pmod 4$;} \\
         \frac{n}{2}, & \hbox{$n\equiv 2 \pmod 4$}
       \end{array}
     \right.
\end{equation*}
and
\begin{equation*}
    |V(T_{y''})|=\left\{
       \begin{array}{ll}
         \frac{n-2}{2}, & \hbox{$n\equiv 0 \pmod 4$;} \\
         \frac{n-4}{2}, & \hbox{$n\equiv 2 \pmod 4$.}
       \end{array}
     \right.
\end{equation*}
Similar computation yields
\begin{eqnarray*}
  \sigma_{T_2}(v) &=& \frac{(|V(T_{y'})|+2)^2}{4}+\frac{(|V(T_{y''})|+2)^2}{4}-\frac{3}{2}\\
  &=&\left\{
       \begin{array}{ll}
         \frac{n^2+4n-8}{8}, & \hbox{$n\equiv 0 \pmod 4$;} \\
         \frac{n^2+4n-4}{8}, & \hbox{$n\equiv 2 \pmod 4$.}
       \end{array}
     \right.
\end{eqnarray*}
Following simple algebra we have
$$ \sigma_{T_2}(v) \geq \sigma_{T_1}(v) $$
with equality if and only if $T_1\cong T_2$.
\end{proof}

From Lemma~\ref{lem:com} and \eqref{eq:1''}, \eqref{eq:1'''}, we conclude this section with the main result.

\begin{theo}
\label{theo:dis}
Among all binary trees with $n$ (even) vertices, we have
\begin{equation*}
    \min_T \left( \min\limits_{v \in Core(T)\atop w\in L(T)}\frac{\sigma_T(w)}{\sigma_T(v)} \right)= \left\{
       \begin{array}{ll}
         \frac{n^2+12n-16}{n^2+4n-8}, & \hbox{$n\equiv 0 \pmod 4$} \\
         \frac{n^2+12n-12}{n^2+4n-4}, & \hbox{$n\equiv 2 \pmod 4$}
       \end{array}
     \right. ,
\end{equation*}
achieved by the binary caterpillar with $v$ being a centroid vertex  and $w$ being its only leaf neighbor.
\end{theo}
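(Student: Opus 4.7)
The plan is to assemble the pieces already established in Sections 3.1--3.3. The case analysis on $d_T(w,v)$ has reduced the problem to comparing the two candidate structures $T_1$ (Figure~\ref{fig:3cat}) and $T_2$ (Figure~\ref{fig:opt}): the reduction via Lemma~\ref{lem:1} and Lemma~\ref{lem:2} rules out $d_T(w,v)\ge 3$, while Propositions~\ref{prop:rdis} and \ref{prop:rdis'} pin down the shape of the branches when $d_T(w,v)\in\{1,2\}$. So first I would record this reduction explicitly, stating that any extremal pair $(T,v,w)$ must come from either $T_1$ or $T_2$.

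Next I would use the two closed-form expressions \eqref{eq:1''} and \eqref{eq:1'''} together with Lemma~\ref{lem:com} to break the tie in favor of $T_2$. Concretely, for $T_2$ the ratio equals exactly $1+(n-1)/\sigma_{T_2}(v)$, while for $T_1$ inequality \eqref{eq:1''} gives the strict bound $\sigma_{T_1}(w)/\sigma_{T_1}(v) > 1+(n-1)/\sigma_{T_1}(v)$. Combining this with $\sigma_{T_2}(v)\ge \sigma_{T_1}(v)$ from Lemma~\ref{lem:com} yields
\begin{equation*}
\frac{\sigma_{T_1}(w)}{\sigma_{T_1}(v)} > 1+\frac{n-1}{\sigma_{T_1}(v)} \ge 1+\frac{n-1}{\sigma_{T_2}(v)} = \frac{\sigma_{T_2}(w)}{\sigma_{T_2}(v)},
\end{equation*}
so the minimum is attained by the binary caterpillar $T_2$ with $v$ a centroid vertex and $w$ its leaf neighbor.

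Finally I would substitute the two explicit values of $\sigma_{T_2}(v)$ computed inside the proof of Lemma~\ref{lem:com}, namely $\sigma_{T_2}(v)=(n^2+4n-8)/8$ when $n\equiv 0\pmod 4$ and $\sigma_{T_2}(v)=(n^2+4n-4)/8$ when $n\equiv 2\pmod 4$, into $1+(n-1)/\sigma_{T_2}(v)$. A direct simplification produces the two fractions $(n^2+12n-16)/(n^2+4n-8)$ and $(n^2+12n-12)/(n^2+4n-4)$ in the statement of the theorem.

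There is no real obstacle here beyond bookkeeping, since all the substantive inequalities have been proved. The only care required is to remember that the strict inequality in \eqref{eq:1''} relies on $v\in C(T)$ forcing $n_v-1\ge(n-1)/2$ through Proposition~\ref{prop:dis} and the parity of $n$, and that the equality case $T_1\cong T_2$ in Lemma~\ref{lem:com} is exactly the degenerate situation where $T_1$ collapses to the caterpillar, so the strict ratio comparison above is not affected.
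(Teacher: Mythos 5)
Your proposal is correct and follows essentially the same route as the paper: the paper's proof of Theorem~\ref{theo:dis} is exactly the combination of the case reduction from Sections 3.1--3.3 with \eqref{eq:1''}, \eqref{eq:1'''} and Lemma~\ref{lem:com}, followed by substituting the closed forms of $\sigma_{T_2}(v)$. Your chain of inequalities and the final arithmetic check out.
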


\section{Minimum $F_T(v)/F_T(w)$ for $v \in Core(T)$ and $w\in L(T)$}
\label{sec:minf}

Similar to the last section, we start with examining the characteristics of the extremal structure through different cases.

\subsection{$d_T(v,w)=1$}

Let $T_v$ denote the component containing $v$ in $T - vw$, it is easy to see that
\begin{equation}
\label{eq:2}
\frac{F_T(v)}{F_T(w)} = \frac{2F_{T_v}(v)}{1+F_{T_v}(v)} = 2 - \frac{2}{1+F_{T_v}(v)} < 2.
\end{equation}
Hence, $\frac{F_T(v)}{F_T(w)}$ is minimized when $F_{T_v}(v)$ is minimized (in order to maximize the negative term).

Let the children of $v$ in $T_v$ (as rooted at $v$) be $v_1$ and $v_2$, for $i=1,2$ we use $T_i$ to denote the subtree rooted at $v_i$, induced by $v_i$ and its descendants (Figure~\ref{fig:opt1}).

\begin{figure}[htbp]
\centering
    \begin{tikzpicture}[scale=.8]
        \node[fill=black,circle,inner sep=1pt] (t1) at (-2,0) {};
        \node[fill=black,circle,inner sep=1pt] (t2) at (0,1) {};
        \node[fill=black,circle,inner sep=1pt] (t3) at (0,2) {};
        \node[fill=black,circle,inner sep=1pt] (t4) at (2,0) {};

        \draw (t1)--(t2)--(t3);
        \draw (t4)--(t2);
        \draw [dashed] (t4)--(1,-1.5)--(3,-1.5)--cycle;
        \draw [dashed] (t1)--(-3,-1.5)--(-1,-1.5)--cycle;

        \node at (-.3,1) {$v$};
        \node at (-2.3,0) {$v_1$};
        \node at (-.3,2) {$w$};
        \node at (-2,-1) {$T_1$};
        \node at (2,-1) {$T_2$};
        \node at (2.3,0) {$v_2$};

        \end{tikzpicture}
\caption{Extremal binary tree $T$ with $w$, $v$, $v_1$, $v_2$ and $T_1$, $T_2$.}\label{fig:opt1}
\end{figure}
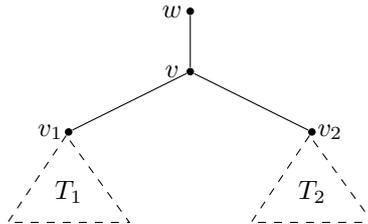

Supposing without loss of generality that $F_{T_1}(v_1) \geq F_{T_2}(v_2)$, by Proposition~\ref{prop:sub} we must have
\begin{equation}
\label{eq:cond1}
2 \left( 1+ F_{T_2}(v_2) \right) \geq F_{T_1}(v_1) \geq F_{T_2}(v_2) .
\end{equation}

Under these conditions, we now examine the lower bound of $F_{T_v}(v)$, and consequently that of $1+F_{T_v}(v) = F_T(w)$.

\begin{lemma}\label{lem:4.1}
Let $T$ be a tree of order $n$, defined above as in Figure~\ref{fig:opt1} under condition \eqref{eq:cond1}, then
\begin{equation}\label{eq:4.1}
    F_{T_v}(v)\ge \left\{
                \begin{array}{ll}
                  9\cdot2^{\frac{n-4}{2}}-3\cdot2^{\frac{n}{4}}+1, & \hbox{$n\equiv0\pmod 4$} \\
                  9\cdot2^{\frac{n-4}{2}}-3\cdot2^{\frac{n-2}{4}}-3\cdot2^{\frac{n-6}{4}}+1, & \hbox{$n\equiv2\pmod 4$}
                \end{array}
              \right.
\end{equation}
with equality if and only if $T_1$ and $T_2$ are rooted binary caterpillars with
$|V(T_1)|=|V(T_2)|=\frac{|n-2|}{2}$ if $n\equiv0\pmod 4$ and $|V(T_1)|=|V(T_2)|+2=\frac{n}{2}$ if $n\equiv2\pmod 4$.
\end{lemma}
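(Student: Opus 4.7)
My first step is to express $F_{T_v}(v)$ in a product form. Since $v$ has exactly the neighbors $v_1$ and $v_2$ in $T_v$, every subtree of $T_v$ containing $v$ is uniquely specified by an independent choice, on each side $i=1,2$, between ``do not extend into $T_i$'' and ``include the edge $vv_i$ together with a subtree of $T_i$ containing $v_i$'', yielding
\[
F_{T_v}(v) = (1 + F_{T_1}(v_1))(1 + F_{T_2}(v_2)).
\]
I would then rewrite condition \eqref{eq:cond1} as $F_{T_1}(v_1) \le 2(1+F_{T_2}(v_2))$, noting that this is precisely the inequality supplied by Proposition~\ref{prop:sub} applied to the edge $vv_1$.

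Next, set $a_i = |V(T_i)|$, which is odd since $T_i$ is a rooted binary tree, and write $a_i = 2m_i+1$, so that $m_1 + m_2 = (n-4)/2$. By Proposition~\ref{prop:rsub} the minimum of $F_{T_i}(v_i)$ over rooted binary trees on $2m_i+1$ vertices is attained uniquely by the rooted binary caterpillar; a one-line induction using the ``leaf child plus caterpillar child'' decomposition gives $c_k = 2(1+c_{k-1})$, and hence $c_k = 3\cdot 2^{k} - 2$. Consequently
\[
F_{T_i}(v_i) \ge 3\cdot 2^{m_i} - 2,
\]
with equality if and only if $T_i$ is a rooted binary caterpillar. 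This will eventually produce the target expression once the correct $(m_1,m_2)$ is identified.

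The heart of the proof is to determine the optimal $(m_1,m_2)$ while simultaneously forcing both $T_i$ to be caterpillars at the optimum. Replacing $T_1$ by a caterpillar of the same size only decreases $F_{T_1}(v_1)$, hence cannot violate \eqref{eq:cond1}, and strictly decreases $F_{T_v}(v)$ unless $T_1$ was already a caterpillar; so I may assume $T_1$ is a rooted binary caterpillar. The analogous substitution on $T_2$ is safe precisely when $m_1 \le m_2+1$. If instead $m_1 \ge m_2+2$, then \eqref{eq:cond1} forces $1+F_{T_2}(v_2) \ge (3\cdot 2^{m_1} - 2)/2$, so
\[
F_{T_v}(v) \ge (3\cdot 2^{m_1}-1)(3\cdot 2^{m_1-1}-1).
\]
With $m_1^{\ast} := \lceil (n-4)/4 \rceil$ denoting the exponent in the balanced extremal configuration, one checks $m_1 \ge m_1^{\ast}+1$ in this regime, so the bound above is in turn at least $(6\cdot 2^{m_1^{\ast}}-1)(3\cdot 2^{m_1^{\ast}}-1)$, which strictly exceeds the balanced-case product $(3\cdot 2^{m_1^{\ast}}-1)(3\cdot 2^{m_2^{\ast}}-1)$ because $6\cdot 2^{m_1^{\ast}}-1 > 3\cdot 2^{m_2^{\ast}}-1$. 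Hence the extremum must lie in the regime $|m_1-m_2|\le 1$ with both $T_i$ caterpillars.

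For the final step I would expand
\[
(3\cdot 2^{m_1}-1)(3\cdot 2^{m_2}-1) = 9\cdot 2^{(n-4)/2} - 3(2^{m_1}+2^{m_2}) + 1,
\]
so that minimising it amounts to maximising $2^{m_1}+2^{m_2}$ over pairs with $m_1+m_2=(n-4)/2$ and $m_1-m_2\in\{0,1\}$. The parity of $(n-4)/2$ then leaves exactly one admissible choice: $m_1=m_2=(n-4)/4$ when $n\equiv 0 \pmod 4$, and $m_1=m_2+1=(n-2)/4$ when $n\equiv 2 \pmod 4$. Direct substitution yields the two cases of \eqref{eq:4.1}, and translating back via $|V(T_i)|=2m_i+1$ recovers the stated extremal sizes and the equality characterisation. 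I expect the unbalanced-case comparison to be the main obstacle: one must verify cleanly, across both parity classes of $n$, that the regime $m_1\ge m_2+2$ cannot beat the balanced configuration, and the key observation is precisely the factor-of-$2$ gap between $3\cdot 2^{m_1^{\ast}+1}-1$ and $3\cdot 2^{m_1^{\ast}}-1$ used above.
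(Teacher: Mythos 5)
Your setup (the product formula $F_{T_v}(v)=(1+F_{T_1}(v_1))(1+F_{T_2}(v_2))$, the caterpillar recursion giving $3\cdot 2^{k}-2$, and the treatment of the regime $m_1\ge m_2+2$ via $1+F_{T_2}(v_2)\ge F_{T_1}(v_1)/2$) matches the paper's proof, but there is a genuine gap at the step ``hence the extremum must lie in the regime $|m_1-m_2|\le 1$.'' Your dichotomy is $m_1\le m_2+1$ versus $m_1\ge m_2+2$, and in the first branch you caterpillarize both sides and then minimise $(3\cdot 2^{m_1}-1)(3\cdot 2^{m_2}-1)$ only over pairs with $m_1-m_2\in\{0,1\}$. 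But the labelling $F_{T_1}(v_1)\ge F_{T_2}(v_2)$ does not imply $m_1\ge m_2$: a smaller but more balanced $T_1$ can have more root-subtrees than a larger $T_2$. So the first branch also contains every pair with $m_2\ge m_1+2$, and for those pairs the double-caterpillar product is \emph{smaller} than the claimed bound --- e.g.\ for $n=20$ the target is $47^2=2209$, while $(m_1,m_2)=(1,7)$ gives $(3\cdot 2-1)(3\cdot 2^{7}-1)=1915$. Your argument never rules such pairs out, and caterpillarizing is useless there because the bound $F_{T_1}(v_1)\ge 3\cdot 2^{m_1}-2$ on the small side is far too weak to recover the target.

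The paper closes exactly this case (its subcases (A-1) and (B-1)) by \emph{not} caterpillarizing the smaller subtree: when $|V(T_1)|<|V(T_2)|$ one has $m_2\ge\frac{n-4}{4}+1$ (for $n\equiv 0\pmod 4$; similarly for $n\equiv 2$), hence $F_{T_1}(v_1)\ge F_{T_2}(v_2)\ge 3\cdot 2^{m_2}-2\ge 3\cdot 2^{n/4}-2$, so \emph{both} factors of the product are at least $3\cdot 2^{n/4}-1$ and $F_{T_v}(v)\ge (3\cdot 2^{n/4}-1)^2$, which strictly exceeds the target. In other words, in the regime $m_2>m_1$ the operative lower bound on $F_{T_1}(v_1)$ is $F_{T_2}(v_2)$ itself (via the ordering in \eqref{eq:cond1}), not the caterpillar bound for $|V(T_1)|$. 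You need to add this third case; with it in place, the remainder of your proposal is sound and agrees with the paper's computation, including the unbalanced comparison and the equality characterisation.
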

\begin{proof}
Let $f_k$ be the number of subtrees containing the root of a rooted binary caterpillar with $k$ internal vertices ($(2k+1)$ vertices). It is easy to see that $f_k=2(1+f_{k-1})$ and $f_1=4$, resulting in $f_k=3\cdot2^k-2$.

Then by Proposition \ref{prop:rsub}, for any rooted binary tree $T'$ with root $r'$, we have
$$F_{T'}(r')\ge f_{\frac{|V(T')|-1}{2}}=3\cdot2^{\frac{|V(T')|-1}{2}}-2$$
with equality if and only if $T'$ is a binary caterpillar rooted at $r'$.

We now consider two different cases depending on the value of $n \mod 4$.

\begin{itemize}
\item[{\bf (A)}] $n\equiv0\pmod 4$:

\begin{itemize}
\item[(A-1)] $|V(T_1)|<|V(T_2)|$.
As $|V(T_1)|+|V(T_2)|=n-2$ and both $|V(T_1)|$ and $|V(T_2)|$ are odd, we have $|V(T_2)|\ge \frac{n+2}{2}$.
Consequently
$$F_{T_1}(v_1)\ge F_{T_2}(v_2)\ge f_{\frac{|V(T_2)|-1}{2}}
\ge f_{\frac{n}{4}}=3\cdot2^{\frac{n}{4}}-2 $$
and
\begin{eqnarray*}
  F_{T_v}(v)&=& (1+F_{T_1}(v_1))(1+F_{T_2}(v_2)) \\
   &\ge & (1+(3\cdot2^{\frac{n}{4}}-2))^2 \\
   &>& 9\cdot2^{\frac{n-4}{2}}-3\cdot2^{\frac{n}{4}}+1.
\end{eqnarray*}

\item[(A-2)] $|V(T_1)|> |V(T_2)|$ and hence $|V(T_1)|\ge \frac{n+2}{2}$.
Then
$$
F_{T_1}(v_1)\ge f_{\frac{|V(T_1)|-1}{2}}\ge f_{\frac{n}{4}}=3\cdot2^{\frac{n}{4}}-2.
$$
Note that by \eqref{eq:cond1}, we have
$$
F_{T_2}(v_2)+1\ge \frac{F_{T_1}(v_1)}{2}.
$$
Hence
\begin{eqnarray*}
  F_{T_v}(v)&=& (1+F_{T_1}(v_1))(1+F_{T_2}(v_2)) \\
   &\ge & \frac{(1+F_{T_1}(v_1))F_{T_1}(v_1)}{2} \\
   &\ge& \frac{(3\cdot2^{\frac{n}{4}}-1)(3\cdot2^{\frac{n}{4}}-2)}{2}\\
   &>& 9\cdot2^{\frac{n-4}{2}}-3\cdot2^{\frac{n}{4}}+1.
\end{eqnarray*}

\item[(A-3)] $|V(T_1)|=|V(T_2)|=\frac{n-2}{2}$. In this case, for $i=1,2$, we have
$$
F_{T_i}(v_i) \geq f_{\frac{|V(T_i)|-1}{2}} = f_{\frac{n-4}{4}}=3\cdot2^{\frac{n-4}{4}}-2
$$
with equality if and only if $T_i$ is a binary caterpillar rooted at $v_i$.
Consequently
$$
F_{T_v}(v)=(1+F_{T_1}(v_1))(1+F_{T_2(v_2)})\ge 9\cdot2^{\frac{n-4}{2}}-3\cdot2^{\frac{n}{4}}+1
$$
with equality if and only if $T_1$ and $T_2$ are rooted binary caterpillars with $|V(T_1)|=|V(T_2)|=\frac{n-2}{2}$.

\end{itemize}

\item[{\bf (B)}] $n\equiv2\pmod 4$: note that $|V(T_1)|\neq |V(T_2)|$ as both $|V(T_1)|$ and $|V(T_2)|$ are odd and $|V(T_1)|+|V(T_2)|=n-2$ is divisible by 4.

\begin{itemize}

\item[(B-1)] $|V(T_1)|\le |V(T_2)|-2$ and hence $|V(T_2)|\ge \frac{n}{2}$.
Then
$$
F_{T_1}(v_1)\ge F_{T_2}(v_2)\ge f_{\frac{|V(T_2)|-1}{2}}
\ge f_{\frac{n-2}{4}}=3\cdot2^{\frac{n-2}{4}}-2
$$
and
\begin{eqnarray*}
  F_{T_v}(v)&=& (1+F_{T_1}(v_1))(1+F_{T_2}(v_2)) \\
   &\ge & (1+(3\cdot2^{\frac{n-2}{4}}-2))^2 \\
   &>& 9\cdot2^{\frac{n-4}{2}}-3\cdot2^{\frac{n-2}{4}}-3\cdot2^{\frac{n-6}{4}}+1.
\end{eqnarray*}

\item[(B-2)] $|V(T_2)|<|V(T_1)|-2$ and hence $|V(T_1)|\ge \frac{n+4}{2}$.
Then
$$F_{T_1}(v_1)\ge f_{\frac{|V(T_1)|-1}{2}}\ge f_{\frac{n+2}{4}} =3\cdot2^{\frac{n+2}{4}}-2.$$
By \eqref{eq:cond1} we have
$$
F_{T_2}(v_2)+1\ge \frac{F_{T_1}(v_1)}{2}.
$$
Consequently
\begin{eqnarray*}
  F_{T_v}(v)&=& (1+F_{T_1}(v_1))(1+F_{T_2}(v_2)) \\
   &\ge & \frac{(1+F_{T_1}(v_1))F_{T_1}(v_1)}{2} \\
   &\ge& \frac{(3\cdot2^{\frac{n+2}{4}}-1)(3\cdot2^{\frac{n+2}{4}}-2)}{2}\\
   &>& 9\cdot2^{\frac{n-4}{2}}-3\cdot2^{\frac{n-2}{4}}-3\cdot2^{\frac{n-6}{4}}+1.
\end{eqnarray*}

\item[(B-3)] $|V(T_2)|= |V(T_1)|-2$ and thus $|V(T_1)|= \frac{n}{2}$, $|V(T_2)|= \frac{n-4}{2}$.
In this subcase we have
\begin{eqnarray*}
  F_{T_v}(v)&=& (1+F_{T_1}(v_1))(1+F_{T_2(v_2)}) \\
      &\ge & (1+f_{\frac{n-2}{4}})(1+f_{\frac{n-6}{4}})\\
   &=& (3\cdot2^{\frac{n-2}{4}}-1)(3\cdot2^{\frac{n-6}{4}}-1)\\
   &=& 9\cdot2^{\frac{n-4}{2}}-3\cdot2^{\frac{n-2}{4}}-3\cdot2^{\frac{n-6}{4}}+1.
\end{eqnarray*}
with equality if and only if $T_1$ and $T_2$ are rooted binary caterpillars with $|V(T_1)|= \frac{n}{2}$ and $|V(T_1)|= \frac{n-4}{2}$.

\end{itemize}
\end{itemize}
\end{proof}

\subsection{$d_T(v,w)=2$}

In this case we let $x \notin Core(T)$ be the common neighbor of $w$ and $v$, and denote by $T_x$ (resp. $T_v$) the component containing $x$ (resp. $v$) in $T-wx - xv$ (Figure~\ref{fig:tv_tx}).

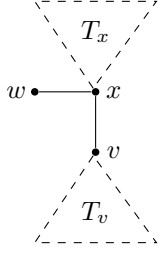
\begin{figure}[htbp]
\centering
    \begin{tikzpicture}[scale=.8]
        \node[fill=black,circle,inner sep=1pt] (t1) at (0,0) {};
        \node[fill=black,circle,inner sep=1pt] (t2) at (0,1) {};
        \node[fill=black,circle,inner sep=1pt] (t4) at (-1,1) {};

        \draw (t1)--(t2)--(t4);
        \draw [dashed] (t2)--(-1,2.5)--(1,2.5)--cycle;
        \draw [dashed] (t1)--(-1,-1.5)--(1,-1.5)--cycle;

        \node at (.3,0) {$v$};
        \node at (.3,1) {$x$};
        \node at (-1.3,1) {$w$};

        \node at (0,-1) {$T_v$};
        \node at (0,2) {$T_x$};

        \end{tikzpicture}
\caption{Binary tree with $T_x$ and $T_v$}\label{fig:tv_tx}
\end{figure}

Now we have
\begin{equation}
\label{eq:2'}
\frac{F_T(v)}{F_T(w)} =  \frac{(2F_{T_x}(x)+1)F_{T_v}(v)}{(F_{T_v}(v) + 1)F_{T_x}(x) + 1}
= 2 - \frac{ (2 F_{T_x}(x) - F_{T_v}(v)) + 2 }{(F_{T_v}(v) + 1)F_{T_x}(x) + 1} .
\end{equation}

Note that by Proposition~\ref{prop:sub}, since $v$ is in the subtree core but $x$ is not, we have $F_{T_v}(v) > 2 F_{T_x}(x) $. Consequently the numerator of \eqref{eq:2'} is
$$ (2 F_{T_x}(x) - F_{T_v}(v))+2 \leq -1 + 2 =1 , $$
implying that \eqref{eq:2'} is strictly less than 2 only when
\begin{equation}
\label{eq:cond2}
F_{T_v}(v) = 2 F_{T_x}(x) + 1,
\end{equation}
in which case
\begin{equation}
\label{eq:2''}
\frac{F_T(v)}{F_T(w)} = 2 - \frac{ (2 F_{T_x}(x) - F_{T_v}(v)) + 2 }{(F_{T_v}(v) + 1)F_{T_x}(x) + 1} = 2 - \frac{ 1 }{(F_{T_v}(v) + 1)F_{T_x}(x) + 1}.
\end{equation}
This is minimized when $(F_{T_v}(v) + 1)F_{T_x}(x) + 1 = F_{T}(w)$ is minimized.  Denote by $y$ the third neighbor of $x$, and $T_y$ the component containing $y$ in $T-xy$ (Figure~\ref{fig:opt2}), we want to minimize $F_{T}(w)$ under condition \eqref{eq:cond2}.

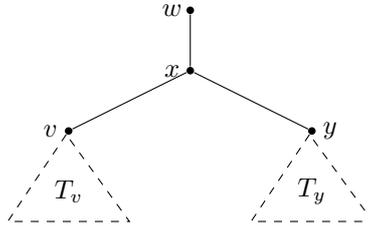
\begin{figure}[htbp]
\centering
    \begin{tikzpicture}[scale=.8]
        \node[fill=black,circle,inner sep=1pt] (t1) at (-2,0) {};
        \node[fill=black,circle,inner sep=1pt] (t2) at (0,1) {};
        \node[fill=black,circle,inner sep=1pt] (t3) at (0,2) {};
        \node[fill=black,circle,inner sep=1pt] (t4) at (2,0) {};

        \draw (t1)--(t2)--(t3);
        \draw (t4)--(t2);
        \draw [dashed] (t4)--(1,-1.5)--(3,-1.5)--cycle;
        \draw [dashed] (t1)--(-3,-1.5)--(-1,-1.5)--cycle;

        \node at (-.3,1) {$x$};
        \node at (-2.3,0) {$v$};
        \node at (-.3,2) {$w$};
        \node at (-2,-1) {$T_v$};
        \node at (2,-1) {$T_y$};
        \node at (2.3,0) {$y$};

        \end{tikzpicture}
\caption{Extremal binary tree $T$ with $w$, $x$, $v$, $y$ and $T_1$, $T_y$.}\label{fig:opt2}
\end{figure}

We now consider the lower bound of $F_T(w)$ for such a tree.

\begin{lemma}\label{lem:4.2}
Let  $T$ be defined as in Figure~\ref{fig:opt2} under condition \eqref{eq:cond2},
we have
\begin{equation}\label{eq:4.2}
    F_{T}(w)\ge \left\{
                \begin{array}{ll}
                  9\cdot2^{\frac{n-2}{2}}-3\cdot2^{\frac{n}{4}}+1, & \hbox{$n\equiv0\pmod 4;$} \\
                  9\cdot2^{\frac{n-4}{2}}-3\cdot2^{\frac{n-2}{4}}+1, & \hbox{$n\equiv2\pmod 4$.}
                \end{array}
              \right.
\end{equation}
\end{lemma}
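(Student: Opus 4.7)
The plan is to follow the structure of Lemma~\ref{lem:4.1}: express $F_T(w)$ as a monotone function of a single auxiliary quantity, then carry out a short case analysis on how the vertices are split between $T_y$ and $T_v$.

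First I would note that $T_x$ consists of the root $x$ with $T_y$ attached via the edge $xy$, so $F_{T_x}(x)=1+F_{T_y}(y)$. Inserting this identity together with the constraint~\eqref{eq:cond2} into~\eqref{eq:2''} gives
\[F_T(w)=2\bigl(F_{T_y}(y)+1\bigr)\bigl(F_{T_y}(y)+2\bigr)+1,\]
which is strictly increasing in $F_{T_y}(y)$. It therefore suffices to lower-bound $F_{T_y}(y)$.

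Write $|V(T_y)|=2k+1$ and $|V(T_v)|=2m+1$; since $|V(T_y)|+|V(T_v)|=n-2$, this gives $k+m=(n-4)/2$. Proposition~\ref{prop:rsub} supplies two lower bounds on $F_{T_y}(y)$: the direct one, $F_{T_y}(y)\ge 3\cdot 2^{k}-2$, obtained by minimising over $T_y$ directly, and an indirect one, $F_{T_y}(y)\ge 3\cdot 2^{m-1}-2$, obtained by applying the same proposition to $T_v$ and combining with $F_{T_v}(v)=2F_{T_y}(y)+3$. Since the direct bound is increasing in $k$ and the indirect bound is decreasing, I would minimise their maximum over admissible $k$. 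A brief case analysis on $n\bmod 4$, patterned on the sub-cases A-1/A-2/A-3 and B-1/B-2/B-3 of Lemma~\ref{lem:4.1}, yields $F_{T_y}(y)\ge 3\cdot 2^{n/4-1}-2$ when $n\equiv 0\pmod 4$ and $F_{T_y}(y)\ge 3\cdot 2^{(n-6)/4}-2$ when $n\equiv 2\pmod 4$.

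Substituting these bounds into the monotone expression for $F_T(w)$, and letting $p=3\cdot 2^{n/4-1}$ and $q=3\cdot 2^{(n-6)/4}$ in the two residue classes, routine expansion gives
\[F_T(w)\ge 2p(p-1)+1=9\cdot 2^{(n-2)/2}-3\cdot 2^{n/4}+1\]
for $n\equiv 0\pmod 4$ and
\[F_T(w)\ge 2q(q-1)+1=9\cdot 2^{(n-4)/2}-3\cdot 2^{(n-2)/4}+1\]
for $n\equiv 2\pmod 4$, matching~\eqref{eq:4.2}. The main obstacle will be justifying, in each residue class, that the maximum of the two competing bounds on $F_{T_y}(y)$ is indeed minimised at the claimed $(k,m)$; this requires tracking the parities ($|V(T_y)|$ and $|V(T_v)|$ are both odd) and the integer-valued rounding in the indirect bound, and is parallel in spirit to the sub-case analysis already carried out in Lemma~\ref{lem:4.1}.
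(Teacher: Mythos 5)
Your proposal is correct and follows essentially the same route as the paper: the identity $F_T(w)=2\bigl(F_{T_y}(y)+1\bigr)\bigl(F_{T_y}(y)+2\bigr)+1$, Proposition~\ref{prop:rsub} applied to whichever of $T_y$, $T_v$ is larger, and a case split on $n\bmod 4$. The paper merely packages your ``indirect'' bound as a second formula $F_T(w)=\bigl(F_{T_v}(v)^2+1\bigr)/2$ combined with the observation that $F_{T_v}(v)$ is odd --- which is exactly your integer-rounding step --- and the arithmetic at the crossing point ($k=n/4-1$ or $n/4-2$ for $n\equiv 0$, $k=(n-6)/4$ for $n\equiv 2$) checks out in both residue classes.
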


\begin{proof}
First note that with our notations condition \eqref{eq:cond2} is equivalent to
$$ F_{T_v}(v)=2F_{T_y}(y)+3. $$
Hence on the one hand we have
\begin{eqnarray*}
  F_T(w) &=& 1+(F_{T_v}(v) + 1)F_{T_x}(x)=1+(F_{T_v}(v) + 1)(F_{T_y}(y) + 1) \\
   &=& 1+2(F_{T_y}(y)+2)(F_{T_y}(y)+1).
\end{eqnarray*}
On the other hand we have
$$  F_T(w) =1+(F_{T_v}(v) + 1)(F_{T_y}(y) + 1) = 1 + \frac12 (F_{T_v}(v) + 1)(F_{T_v}(v) - 1) = \frac{(F_{T_v}(v))^2+1}{2} . $$
Similar to before, we now consider two cases depending on $n \mod 4$.

\begin{itemize}
\item[{\bf (A)}] $n \equiv 0 \pmod 4$:
\begin{itemize}
\item[(A-1)] $|V(T_v)|\le |V(T_y)|$. As $|V(T_v)|+|V(T_y)|=n-2$ and both $|V(T_v)|$ and $|V(T_y)|$ are odd, we have that $|V(T_y)|\ge \frac{n-2}{2}$ and $$ F_{T_y}(y)\ge f_{\frac{n-4}{4}}=3\cdot2^{\frac{n-4}{4}}-2 . $$
Consequently
\begin{eqnarray*}
  F_T(w)&=& 1+2(F_{T_y}(y)+2)(F_{T_y}(y)+1) \\
&\ge& 1+2(f_{\frac{n-4}{4}}+2)(f_{\frac{n-4}{4}}+1)\\
   &= &9\cdot2^{\frac{n-2}{2}}-3\cdot2^{\frac{n}{4}}+1.
   \end{eqnarray*}

\item[(A-2)] $|V(T_v)|> |V(T_y)|$ and thus $|V(T_v)|\ge \frac{n+2}{2}$.
Since $F_{T_v}(v)=2F_{T_y}(y)+3$ is odd, we have
$$F_{T_v}(v)\ge f_{\frac{|V(T_v)|-1}{2}}+1= 3\cdot2^{\frac{n}{4}}-1 . $$
Hence
\begin{eqnarray*}
  F_{T}(w)&=& \frac{F_{T_v}(v)^2+1}{2} \\
   &\ge & \frac{(3\cdot2^{\frac{n}{4}}-1)^2+1}{2}\\
   &=& 9\cdot2^{\frac{n-2}{2}}-3\cdot2^{\frac{n}{4}}+1.
\end{eqnarray*}

\end{itemize}

\item[{\bf (B)}] $n \equiv 2 \pmod 4$: note that we have  $|V(T_v)|\neq |V(T_y)|$ as both $|V(T_v)|$ and $|V(T_y)|$ are odd and $|V(T_v)|+|V(T_y)|=n-2$ is divisible by 4.

\begin{itemize}
\item[(B-1)] $|V(T_v)|\le |V(T_y)|+2$ and thus $|V(T_y)|\ge \frac{n-4}{2}$.
Then
\begin{eqnarray*}
 F_{T}(w)&=& 1+2(F_{T_y}(y)+2)(F_{T_y}(y)+1) \\
   &\ge& 1+2(f_{\frac{n-6}{4}}+2)(f_{\frac{n-6}{4}}+1)\\
   &= &9\cdot2^{\frac{n-4}{2}}-3\cdot2^{\frac{n-2}{4}}+1.
\end{eqnarray*}

\item[(B-2)] $|V(T_y)|<|V(T_v)|-2$ and thus $|V(T_v)|\ge \frac{n+4}{2}$.
Note that $F_{T_v}(v)=2F_{T_y}(y)+3$ is odd and
$$F_{T_v}(v)\ge f_{\frac{|V(T_v)|-1}{2}}+1= 3\cdot2^{\frac{n+2}{4}}-1 . $$
Hence
\begin{eqnarray*}
 F_{T}(w)&=& \frac{F_{T_v}(v)^2+1}{2} \\
   &\ge & \frac{(3\cdot2^{\frac{n+2}{4}}-1)^2+1}{2}\\
   &=& 9\cdot2^{\frac{n}{2}}-3\cdot2^{\frac{n+2}{4}}+1.
\end{eqnarray*}

\end{itemize}
\end{itemize}
\end{proof}

\subsection{$d_T(v,w) \geq 3$}

In this case, let $x \notin Core(T)$ be the unique neighbor of $w$ and let $v' \notin Core(T)$ (resp. $v''$) denote the neighbor of $x$ (resp. $v$) on the path $P(v,x)$, further let $T_{v'}$, $T_{v''}$, $T_x$, $T_v$ be the component containing $v'$, $v''$, $x$, $v$ in $T- xv'$, $T-vv''$, $T-E(P(v',w))$, $T- E(P(v,w))$ respectively (Figure~\ref{fig:g2}).

\begin{figure}[htbp]
\centering
    \begin{tikzpicture}[scale=.8]
        \node[fill=black,circle,inner sep=1pt] (t1) at (0,0) {};
        \node[fill=black,circle,inner sep=1pt] (t2) at (0,1) {};
        \node[fill=black,circle,inner sep=1pt] (t3) at (-.5,2) {};
        \node[fill=black,circle,inner sep=1pt] (t4) at (-1,3) {};

        \draw [dashed] (t1)--(t2)--(t3);
\draw (t2)--(t3)--(t4);
        \draw [dashed] (t2)--(t3);
        \draw [dashed] (t2)--(3,-1.5)--(-2,-1.5)--(t2);
        \draw [dashed] (t1)--(-1,-1.5)--(1,-1.5)--cycle;
\draw [dashed] (t3)--(-.5,3)--(1,3)--(t3);

        \node at (.3,0) {$v$};
        \node at (.3,1.2) {$v'$};
        \node at (-.8,2) {$x$};
        \node at (-1.3,3) {$w$};

        \node at (0,-1) {$T_v$};
\node at (.05,2.7) {$T_x$};
        \node at (1.5,-1) {$T_{v'}$};

        \end{tikzpicture}
\caption{Binary tree with $v,v',x,w$, $T_v$, $T_{v'}$, and $T_x$.}\label{fig:g2}
\end{figure}
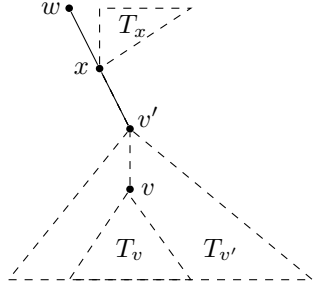

Now we may repeat the argument in the previous section with $w$ and $v'$ and have
$$ \frac{F_T(v')}{F_T(w)} = 2 - \frac{ (2 F_{T_x}(x) - F_{T_{v'}}(v')) + 2 }{(F_{T_{v'}}(v') + 1)F_{T_x}(x) + 1} . $$

It is easy to see that $F_{T_{v'}}(v') >  F_{T_v}(v) $ and $F_{T_{v''}}(v'') \geq 2 F_{T_x}(x) + 1 $. On the other hand, by Proposition~\ref{prop:sub}, $F_{T_v}(v) \geq F_{T_{v''}}(v'')$. Consequently
$$ F_{T_{v'}}(v') >  F_{T_v}(v) \geq F_{T_{v''}}(v'') \geq  2 F_{T_x}(x) + 1  $$
and
$$ (2 F_{T_x}(x) - F_{T_{v'}}(v')) + 2 \leq 0 .$$
Thus
$$ \frac{F_T(v)}{F_T(w)} > \frac{F_T(v')}{F_T(w)} = 2 - \frac{ (2 F_{T_x}(x) - F_{T_{v'}}(v')) + 2 }{(F_{T_{v'}}(v') + 1)F_{T_x}(x) + 1} \geq 2 . $$

\subsection{The extremal ratio}

With the above discussion, to minimize $\frac{F_T(v)}{F_T(w)}$ we only need to compare:
\begin{itemize}
\item[(I)] the minimum value of \eqref{eq:2} for $T$ as in Figure~\ref{fig:opt1} under condition \eqref{eq:cond1}; and
\item[(II)] the minimum value of \eqref{eq:2''} for $T$ as in Figure~\ref{fig:opt2} under condition \eqref{eq:cond2}.
\end{itemize}

From Lemmas~\ref{lem:4.1} and \ref{lem:4.2}, simple algebra shows that case (I) above yields the minimum $\frac{F_T(v)}{F_T(w)}$. This minimum value is
\begin{align*}
 2 - \frac{2}{1+\left( 9\cdot2^{\frac{n-4}{2}}-3\cdot2^{\frac{n}{4}}+1\right) } & = \frac{ 9\cdot2^{\frac{n-2}{2}}-3\cdot2^{\frac{n+4}{4}}+2 }{9\cdot2^{\frac{n-4}{2}}-3\cdot2^{\frac{n}{4}}+2} \\
& =  \frac{ 9\cdot2^{\frac{n-4}{2}}-3\cdot2^{\frac{n}{4}}+1 }{9\cdot2^{\frac{n-6}{2}}-3\cdot2^{\frac{n-4}{4}}+1}
\end{align*}
when $n \equiv 0 \pmod 4$ and
\begin{align*}
 2 - \frac{2}{1+\left( 9\cdot2^{\frac{n-4}{2}}-3\cdot2^{\frac{n-2}{4}}-3\cdot2^{\frac{n-6}{4}}+1 \right) } &
= \frac{ 9\cdot2^{\frac{n-2}{2}}-3\cdot2^{\frac{n+2}{4}}-3\cdot2^{\frac{n-2}{4}} +2 }{9\cdot2^{\frac{n-4}{2}}-3\cdot2^{\frac{n-2}{4}}-3\cdot2^{\frac{n-6}{4}}+2} \\
& =  \frac{ 9\cdot2^{\frac{n-4}{2}}-3\cdot2^{\frac{n-2}{4}}-3\cdot2^{\frac{n-6}{4}}+1 }{9\cdot2^{\frac{n-6}{2}}-3\cdot2^{\frac{n-6}{4}}-3\cdot2^{\frac{n-10}{4}}+1}
\end{align*}
when $n \equiv 2 \pmod 4$.

\begin{theo}
\label{theo:sub}
Among all binary trees $T$ with $n$ (even) vertices, we have
$$ \min_T \left( \min\limits_{v \in Core(T)\atop w\in L(T)} \frac{F_T(v)}{F_T(w)} \right) = \left\{
                \begin{array}{ll}
                 \frac{ 9\cdot2^{\frac{n-4}{2}}-3\cdot2^{\frac{n}{4}}+1 }{9\cdot2^{\frac{n-6}{2}}-3\cdot2^{\frac{n-4}{4}}+1}, & \hbox{$n\equiv0\pmod 4$} \\[5pt]
                  \frac{ 9\cdot2^{\frac{n-4}{2}}-3\cdot2^{\frac{n-2}{4}}-3\cdot2^{\frac{n-6}{4}}+1 }{9\cdot2^{\frac{n-6}{2}}-3\cdot2^{\frac{n-6}{4}}-3\cdot2^{\frac{n-10}{4}}+1} , & \hbox{$n\equiv2\pmod 4$}
                \end{array}
              \right.  , $$
achieved by the binary caterpillar with $v$ being a subtree core vertex (located in the middle) and $w$ being its only leaf neighbor.
\end{theo}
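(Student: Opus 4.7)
The plan is to assemble the three case analyses of Section~\ref{sec:minf} (for $d_T(v,w)=1$, $d_T(v,w)=2$, and $d_T(v,w)\ge 3$) into a single optimization and then carry out an explicit comparison. First, I would recall that Subsection 4.3 already shows
\[
\frac{F_T(v)}{F_T(w)} \geq 2 \quad \text{whenever } d_T(v,w)\geq 3,
\]
so any tree attaining the global minimum must have $d_T(v,w)\in\{1,2\}$. This reduces the global minimum to the smaller of two candidate values:
\begin{itemize}
\item[(I)] the infimum of \eqref{eq:2} over trees as in Figure~\ref{fig:opt1} satisfying \eqref{eq:cond1};
\item[(II)] the infimum of \eqref{eq:2''} over trees as in Figure~\ref{fig:opt2} satisfying \eqref{eq:cond2}.
\end{itemize}

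Next, I would plug in the lower bounds from Lemmas~\ref{lem:4.1} and \ref{lem:4.2}. Since the right-hand side of \eqref{eq:2} is strictly increasing in $F_{T_v}(v)$, Lemma~\ref{lem:4.1} gives the infimum in (I) as $2-2/(1+B_1(n))$, where $B_1(n)$ is the corresponding right-hand side of \eqref{eq:4.1}. Likewise, \eqref{eq:2''} is strictly increasing in $F_T(w)$, so Lemma~\ref{lem:4.2} gives the infimum in (II) as $2-1/B_2(n)$, where $B_2(n)$ is the right-hand side of \eqref{eq:4.2}. Both infima are attained by the extremal configurations identified in the equality cases of those lemmas, which in each instance is a binary caterpillar with the appropriate split of vertices around $v$.

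Then comes the key algebraic step: showing (I) $<$ (II) for every even $n\ge 4$. This amounts to verifying
\[
\frac{2}{1+B_1(n)} \;>\; \frac{1}{B_2(n)}, \quad \text{i.e.,} \quad 2B_2(n) \;>\; 1+B_1(n),
\]
in both residue classes $n\equiv 0,2\pmod 4$. For $n\equiv 0\pmod 4$ this reduces to $9\cdot 2^{(n-4)/2}>3\cdot 2^{n/4}$, which simplifies to $9\cdot 2^{(n-8)/4}>1$ and holds for all $n\ge 4$; the case $n\equiv 2\pmod 4$ is analogous and I expect this routine simplification to be the most tedious step rather than the conceptually hard one. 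Combined with the strict inequality in Subsection 4.3, this forces the global minimum to be achieved in Case (I).

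Finally, I would read off the closed form by substituting $B_1(n)$ into $2-2/(1+B_1(n))$ and simplifying to match the two expressions in the statement. For the structural characterization, the equality cases of Lemma~\ref{lem:4.1} say that $T_1,T_2$ are rooted binary caterpillars with the prescribed sizes, so gluing them at $v$ together with the leaf $w$ yields precisely the binary caterpillar in which $v$ is the (unique, up to symmetry) centroid/core vertex and $w$ is its leaf neighbor. The main obstacle I anticipate is not a conceptual one but rather organizing the two-parity comparison cleanly so that the algebraic inequality $2B_2(n)>1+B_1(n)$ is transparent in both residue classes; everything else follows from the lemmas already established.
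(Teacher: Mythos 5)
Your proposal follows essentially the same route as the paper: discard the case $d_T(v,w)\ge 3$, reduce to comparing the two candidates (I) and (II) via the lower bounds of Lemmas~\ref{lem:4.1} and \ref{lem:4.2}, and verify by the explicit inequality $2B_2(n)>1+B_1(n)$ that case (I) wins (the paper leaves this as ``simple algebra,'' which you carry out correctly up to a harmless constant slip). The reduction, the key lemmas invoked, and the identification of the extremal caterpillar all coincide with the paper's argument.
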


\section{Concluding remarks}
\label{sec:con}

In this paper we studied the extremal ratios $\min \sigma_T(w)/\sigma_T(v)$ (for $w \in L(T)$ and $v\in C(T)$) and $\min F_T(v)/F_T(w)$ (for $v \in Core(T)$ and $w \in L(T)$) for binary trees. This is a first step in examining the correlation between the distance and subtree problems in binary trees through extremal ratios. Indeed we found that both extremal ratios are achieved by binary caterpillars with the leaf $w$ adjacent to the middle vertex $v$, located in the middle of the backbone of the caterpillar.

In fact, our findings seem to suggest that the distance and subtree problems are even better correlated in binary trees than general trees, as the two extremal structure for
$\min \sigma_T(w)/\sigma_T(v)$ and $\min F_T(v)/F_T(w)$ are not quite the same \cite{barefoot, ratio1}. Although both are formed by adding a pendant edge to a path, the locations of the pendant edge are very different. See Table~\ref{tab:1} for a quick comparison.

\begin{table}[htbp]
\caption{Comparison between extremal ratio problems in general trees and binary trees.}\label{tab:1}
\begin{center}
\begin{tabular}
{|c|c|c|}
\hline
 & $\min \sigma_T(w)/\sigma_T(v)$ & $\min F_T(v)/F_T(w)$ \\
\hline
$\hbox{Extremal structures} \atop \hbox{in general trees}$ & $\hbox{path with a leaf added} \atop \hbox{at the middle point}$ & $\hbox{path with a leaf added} \atop
\hbox{at position $\sim \frac{2n}{3}$}$ \\
\hline
$\hbox{Extremal structures} \atop \hbox{in binary trees}$ & $\hbox{binary caterpillar with} \atop \hbox{$v,w$ in the middle}$ & $\hbox{binary caterpillar with} \atop \hbox{$v,w$ in the middle}$ \\
\hline
\end{tabular}
\end{center}
\end{table}

It is worth pointing out that our arguments can be directly generalized to analyze the extremal ratios $\min \sigma_T(w)/\sigma_T(v)$ (for $w \in L(T)$ and $v\in C(T)$) and $\min F_T(v)/F_T(w)$ (for $v \in Core(T)$ and $w \in L(T)$) for $k$-ary trees in general. We skip the technical details.

The natural next step is to consider $\max \sigma_T(w)/\sigma_T(v)$ (for $w \in L(T)$ and $v\in C(T)$) and $\max F_T(v)/F_T(w)$ (for $v \in Core(T)$ and $w \in L(T)$) for binary trees. For general trees, both extremal ratios are achieve by the so-called comet (a tree formed by identifying the end of a path with the center of a star). It seems reasonable to conjecture that the corresponding extremal structures in binary trees are formed by identifying an end of a binary caterpillar (the binary version of a path) and the root of a ``rgood'' binary tree (the binary version of a star). See, for instance, \cite{subtrees, largest} for details on these definitions. However, it appears to be difficult to prove such a statement or provide a counter example.

It is, of course, also interesting to investigate extremal ratios involving the global functions for binary trees. We intend to do exactly that in the near future.

\end{document}